\documentclass[12pt]{amsart}
\usepackage[toc,page]{appendix}
\usepackage{amsmath}
\usepackage{fullpage}
\usepackage{xspace}
\usepackage[psamsfonts]{amssymb}
\usepackage[latin1]{inputenc}
\usepackage{graphicx,color}
\input xy
\xyoption{all}
\usepackage{wrapfig}

\usepackage{hyperref}
\usepackage{graphicx}

\theoremstyle{plain}

\def\To_#1{\mathrel{\mathop{\longrightarrow}\limits_{#1}}}
\def\L{{\mathrm{L}}}

\def\S{{\mathrm{S}}}

\def\P{{\mathbb{P}}}
\def\C{{\mathbb{C}}}
\def\T{{\mathcal T}}
\def\M{{\mathcal M}}
\def\W{{\mathcal W}}

\def\E{{\mathrm E}}
\def\V{{\mathcal V}}

\def\hW{\widehat\W}
\def\hV{\widehat\V}
\def\hSL{\widehat\S}

\def\hE{\widehat \E}

\def\hS{\widehat \S}
\def\bp{\circledast}
\def\mod{\mathrm {Mod}}
\def\Def{\mathrm {Def}}
\def\proj{\mathrm{proj}}
\thispagestyle{empty}
\newtheorem{theorem}{\bf Theorem}[section]
\newtheorem{proposition}{\bf Proposition}[section]

\newtheorem{corollary}[proposition]{\bf Corollary}

\newtheorem{remark}[proposition]{\bf Remark}

\title{A disconnected deformation space of rational maps}
\subjclass{}
\begin{author}[E. Hironaka]{Eriko Hironaka}\thanks{The first author was partially supported by a collaboration grant from the Simons Foundation \#209171}
\email{hironaka@math.fsu.edu }
\address{ %
Department of Mathematics\\
Florida State University\\
1017 Academic Way, 208 LOV \\
Tallahassee, FL 32306-4510}
\end{author}
\begin{author}[S. Koch]{Sarah Koch}\thanks{The research of the second author was supported in part by the NSF and the Sloan Foundation}
\email{kochsc@umich.edu}
\address{ %
Department of Mathematics\\
University of Michigan\\
East Hall, 530 Church Street \\
Ann Arbor, Michigan 48109}
\end{author}

\begin{document}

\begin{abstract}
Let $f:(\P^1,P)\to(\P^1,P)$ be a postcritically finite rational map with postcritical set $P$. William Thurston showed that $f$ induces a holomorphic pullback map $\sigma_f:\T_P\to\T_P$ on the Teichm\"uller space ${\T}_P:=\mathrm{Teich}(\P^1,P)$. If $f$ is not a flexible Latt\`es map, Thurston proved 
that $\sigma_f$ has a unique fixed point. 
 In his PhD thesis, Adam Epstein 
generalized  Thurston's ideas and defined a deformation space associated to a rational map $f:(\P^1,A)\to (\P^1,B)$ where $A \subseteq B$, 
allowing for maps $f$ which are not necessarily postcritically finite. By definition, the deformation space $\Def_B^A(f)\subseteq \T_B$ is the locus where the pullback map $\sigma_f:\T_B\to\T_A$ and the forgetful map $\sigma_A^B:\T_B\to\T_A$ agree. 
Using purely local arguments, Epstein showed that $\Def_B^A(f)$  is a smooth 
analytic submanifold of $\T_B$ of dimension $|B-A|$. In this article, we investigate the question of whether $\Def_B^A(f)$ is connected. We exhibit a family of quadratic rational maps for which the associated deformation spaces are disconnected; in fact, each has infinitely many components. 

\end{abstract}
\maketitle
\section{Introduction}

\noindent Let $f:\P^1\to\P^1$ be a rational map of degree at least $2$ with critical set $\Omega$. The postcritical set of $f$ is $P:=\cup_{n>0}f^n(\Omega)$,  the smallest forward invariant set that contains the critical values of $f$.  We regard $f$ as a self-map of the pointed space $(\P^1,P)$, and  $f$ is {\em postcritically finite} if $|P|<\infty$. In the 1980s, William Thurston established a topological characterization of rational maps which are postcritically finite \cite{dh}. The proof of this theorem associates a dynamical system $\sigma_f:\T_P\to\T_P$ to the dynamical system $f:(\P^1,P)\to (\P^1,P)$, where the space $\T_P$ is the {Teichm\"uller space} of the pair $(\P^1,P)$, and the map $\sigma_f:\T_P\to \T_P$ is a holomorphic pullback map. 

The map $\sigma_f:\T_P\to \T_P$ has a fixed point, and if $f:(\P^1,P)\to (\P^1,P)$ is not of {Latt\`es type}, Thurston proved that some iterate of $\sigma_f:\T_P\to\T_P$ is a strict contraction. The fixed point of $\sigma_f:\T_P\to\T_P$ is therefore unique. This fact is known as {\em Thurston rigidity}. 

\begin{theorem}[W. Thurston, \cite{dh}]\label{rigidity}
If $f:(\P^1,P)\to (\P^1,P)$ is a postcritically finite rational map which is not of Latt\`es type, then the associated pullback map $\sigma_f:\T_P\to \T_P$ has a unique fixed point. 
\end{theorem}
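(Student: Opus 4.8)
The plan is to leverage the contracting nature of $\sigma_f$ for the Teichm\"uller metric. First I would recall that, by Royden's theorem, the Teichm\"uller metric on $\T_P$ coincides with the Kobayashi metric; since $\sigma_f:\T_P\to\T_P$ is holomorphic, it is automatically weakly contracting, i.e. $d_{\T_P}(\sigma_f(\tau),\sigma_f(\tau'))\le d_{\T_P}(\tau,\tau')$ for all $\tau,\tau'$. Existence of a fixed point being already in hand, the entire content reduces to upgrading this weak contraction to the strict inequality needed to rule out a second fixed point, and this is where the ``not of Latt\`es type'' hypothesis must enter.

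Second, I would pass to the infinitesimal picture. The cotangent space to $\T_P$ at a point $[X]$ is the space $Q(X,P)$ of integrable meromorphic quadratic differentials on $X$, holomorphic off $P$ with at worst simple poles on $P$, equipped with the $L^1$-norm; and the coderivative of $\sigma_f$ at $\tau$ is the Thurston pushforward $f_*$ on quadratic differentials. The key inequality $\|f_* q\|_1\le\|q\|_1$ follows by writing $f_* q$ as the fiberwise sum $\sum_{f(z)=w}$ of the branches of $q$ and applying the triangle inequality under the covering change of variables; dualizing, this reproduces $\|D\sigma_f\|\le 1$ at the level of operator norms, consistent with the previous paragraph.

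Third, and this is the crux, I would analyze the equality case. Equality $\|f_* q\|_1=\|q\|_1$ forces, at almost every point $w$, the branch contributions to $f_* q$ to share a common argument; this ``phase-locking'' produces a measurable $f$-invariant line field of positive measure on $\P^1$. Thurston's absence-of-invariant-line-fields result (equivalently, the classification via flat orbifold structures) shows this can occur only when $f$ is a Latt\`es map. Hence, for $f$ not of Latt\`es type, $\|f_*^N\|<1$ for some $N$, so the iterate $\sigma_f^N$ strictly contracts the Teichm\"uller metric along the relevant directions -- the strict contraction asserted in the discussion preceding the theorem.

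Finally, for uniqueness, suppose $\tau_0,\tau_1$ were both fixed by $\sigma_f$, hence both fixed by $\sigma_f^N$. Let $\gamma$ be the unique Teichm\"uller geodesic joining them. Because $\sigma_f^N$ is weakly contracting and fixes the endpoints, $\sigma_f^N(\gamma)$ is a path of the same length, hence a geodesic equal to $\gamma$, and $\sigma_f^N$ preserves arclength along $\gamma$; this forces the coderivative to attain norm $1$ on the quadratic differential tangent to $\gamma$, contradicting the strict contraction established in step three unless $\tau_0=\tau_1$. I expect the main obstacle to be precisely that equality-case analysis: converting the $L^1$-rigidity of the pushforward into an invariant line field and then invoking the structure theory that identifies such fields with the Latt\`es locus is where the hypothesis does all the work, and it is the technically deepest point of the argument.
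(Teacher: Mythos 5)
This theorem is quoted by the paper from Douady--Hubbard \cite{dh} without proof; the introduction records only the key fact that, for $f$ not of Latt\`es type, some iterate of $\sigma_f$ is a strict contraction of $\T_P$, whence uniqueness. Your outline --- holomorphy plus Royden's theorem giving weak contraction, identifying the coderivative of $\sigma_f$ with the $L^1$-pushforward $f_*$ on the space $Q(X,P)$ of integrable quadratic differentials, and resolving the equality case via phase-locking of branches (equivalently, the $(2,2,2,2)$ flat-orbifold/Latt\`es classification, which is how \cite{dh} actually phrase it, avoiding any appeal to line fields) --- is precisely the standard Thurston/Douady--Hubbard argument that the paper relies on.
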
 

\noindent In 1993, Adam Epstein generalized Thurston's ideas to rational maps $f:(\P^1,A)\to (\P^1,B)$ which are not necessarily postcritically finite \cite{adam}. In this setting, he imposed the following conditions on the sets $A$ and $B$: 
\begin{enumerate}
\item $A$ and $B$ are finite, each containing at least 3 points,
\item $B$ contains the critical values of $f$,
\item $f(A)\subseteq B$, and 
\item $A\subseteq B$.
\end{enumerate}
The first three conditions are necessary to define a  pullback map $\sigma_f:\T_B\to \T_A$, and the last condition is necessary to define a {forgetful map} $\sigma_A^B:\T_B\to\T_A$ (see Section \ref{prelims} for definitions).

\subsection{The deformation space} Let $f:(\P^1,A)\to (\P^1,B)$ be a rational map for which the sets $A$ and $B$ satisfy the conditions (1)-(4) above. Epstein defined the associated {\em deformation space} 
\[
\Def_B^A(f):=\{\tau\in\T_B\;|\;\sigma_f(\tau)=\sigma_A^B(\tau)\}. 
\] 
In other words, $\Def_B^A(f)$ is the {\em equalizer} of the two maps $\sigma_f:\T_B\to\T_A$ and $\sigma_A^B:\T_B\to\T_A$. 
\begin{theorem}[A. Epstein, \cite{adam}]\label{localDef}
Let $f:(\P^1,A)\to (\P^1,B)$ be a rational map that is not of Latt\`es type. The deformation space $\Def_B^A(f)$ is a smooth analytic submanifold of $\T_B$ of dimension $|B-A|$. 
\end{theorem}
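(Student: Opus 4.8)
The plan is to realize $\Def_B^A(f)$ as a transverse preimage of a diagonal and then reduce both smoothness and the dimension count to an injectivity statement about quadratic differentials. First I would recall the infinitesimal theory: at $\tau\in\T_B$ the tangent space is the space of Beltrami differentials on the underlying $B$-marked sphere modulo infinitesimally trivial ones, while the cotangent space is canonically the space $Q(B)$ of integrable meromorphic quadratic differentials with at worst simple poles along $B$, the pairing being $\langle\mu,q\rangle=\int\mu q$; in genus zero $\dim_{\C}\T_B=|B|-3$ and $\dim_{\C}\T_A=|A|-3$. Under these identifications the derivative of the pullback map $\sigma_f$ acts on Beltrami differentials by $\mu\mapsto f^*\mu$, while the derivative of the forgetful map $\sigma_A^B$ is essentially the identity, since forgetting the marking of $B\setminus A$ does not change the underlying deformation. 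Dualizing, the two codifferentials are the pushforward $f_*\colon Q(A)\to Q(B)$ and the natural inclusion $\iota\colon Q(A)\hookrightarrow Q(B)$ (a differential with poles in $A\subseteq B$ is one with poles in $B$), the adjunction identity $\langle f^*\mu,q\rangle=\langle\mu,f_*q\rangle$ confirming that $(D\sigma_f)^*=f_*$.

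Next I would package the two maps as $\Phi:=(\sigma_f,\sigma_A^B)\colon\T_B\to\T_A\times\T_A$, so that $\Def_B^A(f)=\Phi^{-1}(\Delta)$, where $\Delta\subset\T_A\times\T_A$ is the diagonal. Since $\Delta$ is a smooth submanifold of codimension $\dim_{\C}\T_A=|A|-3$, the preimage theorem for transversality to the diagonal shows that if $\Phi$ is transverse to $\Delta$ at every point of $\Def_B^A(f)$, then $\Def_B^A(f)$ is a smooth analytic submanifold of codimension $|A|-3$, hence of dimension $(|B|-3)-(|A|-3)=|B-A|$, exactly as claimed. A direct computation with the tangent spaces reduces transversality at a point $\tau\in\Def_B^A(f)$ to surjectivity of the map $v\mapsto D\sigma_f(v)-D\sigma_A^B(v)$ from $T_\tau\T_B$ to $T_{\sigma_f(\tau)}\T_A$ (the difference is well defined because $\sigma_f(\tau)=\sigma_A^B(\tau)$ there), and by duality this is equivalent to injectivity of the codifferential $f_*-\iota\colon Q(A)\to Q(B)$.

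The main obstacle is this injectivity: I must show that $f_*q=q$ with $q\in Q(A)$ forces $q=0$. The tool is the contraction estimate $\|f_*q\|\le\|q\|$, which follows from the triangle inequality after the change of variables $w=f(z)$, with equality precisely when no cancellation occurs among the terms $q(z)/f'(z)^2$ over the fibers $f^{-1}(w)$. Since $\iota$ preserves the $L^1$-norm, the hypothesis $f_*q=q$ forces equality, hence a cancellation-free pushforward; geometrically this means the foliation defined by $q$ is carried to itself by $f$, giving an invariant line field. This is exactly the configuration forbidden for maps that are not of Latt\`es type, so $q=0$. I expect the genuinely hard work to lie here, in promoting the equality case of the norm estimate to a rigidity statement in Epstein's non-postcritically-finite setting, where $f$ need not be a self-map and where $f_*q$ a priori acquires poles along $B\setminus A$ that must cancel; the transversality reduction above is formal, but this final step is where the non-Latt\`es hypothesis is indispensable.
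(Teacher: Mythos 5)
The paper contains no proof of Theorem~\ref{localDef}: it is quoted from Epstein's manuscript \cite{adam}, so there is no internal argument to compare against. Your outline is, in substance, Epstein's: realize $\Def_B^A(f)$ as the preimage of the diagonal under $(\sigma_f,\sigma_A^B)$, dualize transversality to injectivity of $\iota - f_*$ on integrable quadratic differentials with at worst simple poles on $A$, and settle injectivity by the contraction $\|f_*q\|\leq \|q\|$ plus infinitesimal Thurston rigidity. One small correction to your framing: in this theorem the domain and range of $f$ \emph{are} identified and $f$ is an honest self-map of $\P^1$; only the marked sets $A\subseteq B$ differ, so your worry about $f$ ``not being a self-map'' does not arise here.

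Two points in your sketch need sharpening before it closes. First, transversality must be checked at every $\tau\in\Def_B^A(f)$, and at such a $\tau$ the codifferential of $\sigma_f$ is pushforward by the \emph{deformed} rational map $F_\tau$ supplied by the commutative diagram at $\tau$, not by $f$ itself; so the rigidity statement must be applied to each $F_\tau$, which requires knowing that no point of $\Def_B^A(f)$ corresponds to a flexible Latt\`es map (in this paper's application the maps are quadratic and flexible Latt\`es maps have square degree, so the issue is vacuous there, but in general it is part of what Epstein must handle). Second, your appeal to an ``invariant line field forbidden for non-Latt\`es maps'' is dangerously close to the measurable invariant line field conjecture, which is open; what the equality case actually gives is the much stronger meromorphic statement. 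Concretely: $f_*q=q$ forces equality in the triangle inequality over almost every fiber, hence $q = c\cdot f^*q$ pointwise with $c\geq 0$ measurable; since both sides are meromorphic, $c$ is a positive constant, and comparing $L^1$-norms gives $f^*q = d\,q$ with $d=\deg f$. The crux is then the classical classification: a rational map admitting a nonzero integrable meromorphic $q$ with $f^*q=dq$ acts affinely on the flat $(2,2,2,2)$-orbifold structure of $q$ and is a flexible Latt\`es map. With these two repairs, your proposal is a correct reconstruction of the cited proof.
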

\noindent If $A=B$ in the theorem above, then $\Def_B^A(f)$ is equal to the set of fixed points of $\sigma_f$, so by Theorem \ref{rigidity}, $\Def_B^A(f)$  consists of a single point \cite{bct}. 

The proof of Theorem \ref{localDef} is purely local and reveals nothing about the global structure of $\Def_B^A(f)$. In this article, we prove that $\Def_B^A(f)$ is disconnected for the following quadratic rational maps.

\subsection {A family of quadratic rational maps} Let $\mathrm{M}_2$ be the moduli space of quadratic rational maps: the space of quadratic rational maps up to conjugation by M\"obius transformations. The locus $\mathrm{Per}_4(0)\subseteq \mathrm{M}_2$ consists of  conjugacy classes of  maps with a superattracting cycle of period $4$. That is, for $\langle f\rangle \in \mathrm{Per}_4(0)$  the map $f:\P^1\to\P^1$ has a critical point which is periodic of period $4$. Define $\mathrm{Per}_4(0)^*\subseteq \mathrm{Per}_4(0)$ to be the subset consisting of $\langle f \rangle$ for which the superattracting $4$-cycle contains only one critical point of $f$. 

Let $f:\P^1\to\P^1$ represent an element of $\mathrm{Per}_4(0)^*$. Define the set $A$ to be the set of  points in the superattracting $4$-cycle, and define the set $B:=A\cup \mathrm{cv}(f)$, where $\mathrm{cv}(f)$ is the set of critical values of $f$. Consider the associated deformation space $\Def_B^A(f)$. By Theorem~\ref{localDef}, $\Def_B^A(f)$ is a 1-dimensional  submanifold of $\T_B$ which is 2-dimensional. The following theorem is our main result.

\begin{theorem}[Main Theorem]\label{mainthm}
For $\langle f\rangle \in\mathrm{Per}_4(0)^* $, the space $\Def_B^A(f)$ has infinitely many connected components. 
\end{theorem}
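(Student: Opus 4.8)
The plan is to study the restriction of the forgetful projection to the deformation space, reduce the whole question to counting a single fiber, and then detect infinitely many points in that fiber by a winding invariant coming from the dynamics near the superattracting cycle.

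First I would fix normalizations and coordinates. For $\langle f\rangle\in\mathrm{Per}_4(0)^*$ write the superattracting cycle as $A=\{a_0,a_1,a_2,a_3\}$ with $a_0$ the periodic critical point, so that exactly one critical value, $f(a_0)=a_1$, lies in $A$, while the second critical value $v_2$ is the unique point of $B-A$. Since $\T_A=\mathrm{Teich}(\P^1,A)$ is the Teichm\"uller space of a four-times punctured sphere, it is biholomorphic to the disk $\mathbb{D}$ and in particular simply connected. I would then realize $\T_B$ as a holomorphic fiber space over $\T_A$ through the forgetful map $\sigma_A^B\colon\T_B\to\T_A$: this is an instance of Bers' fiber space construction, whose fiber over $\tau_A\in\T_A$ is the universal cover of the four-punctured sphere carrying the complex structure $\tau_A$, and which records the position of the free critical value $v_2$. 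Thus $\T_B$ is a disk bundle over $\mathbb{D}$ with a coordinate $z$ lifting the position of $v_2$.

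Second, I would analyze $\pi:=\sigma_A^B|_{\Def_B^A(f)}\colon\Def_B^A(f)\to\T_A$. By Theorem \ref{localDef} the source is a smooth one-dimensional submanifold, and both $\Def_B^A(f)$ and $\T_A$ are one-dimensional. In the Bers coordinates $(\tau_A,z)$ the defining equation of $\Def_B^A(f)$ is $\sigma_f(\tau_A,z)=\tau_A$, so a fiber $\pi^{-1}(\tau_A)$ is exactly the solution set in the Bers fiber of the holomorphic equation $h_{\tau_A}(z)=\tau_A$, where $h_{\tau_A}(z):=\sigma_f(\tau_A,z)$. I would first check that $\pi$ is a local biholomorphism, i.e. that $\Def_B^A(f)$ is nowhere tangent to the fibers of $\sigma_A^B$; this transversality says that moving $v_2$ in its fiber genuinely moves the pulled-back structure, and I expect it to follow from the nondegeneracy implicit in Epstein's local theory. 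Granting that $\pi$ is an unramified covering onto all of the simply connected space $\T_A$, the covering is trivial: $\Def_B^A(f)\cong\T_A\times\pi^{-1}(\tau_A)$, and the number of connected components equals the cardinality of the single fiber $\pi^{-1}(\tau_A)$.

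Third --- and this is the heart of the matter --- I would show that this fiber is infinite. The fiber is the preimage $h_{\tau_A}^{-1}(\tau_A)$ of a point under a holomorphic self-map of the disk, so infinitude is equivalent to $h_{\tau_A}$ being far from a finite Blaschke product. The source of infinitely many solutions is the deck group of the Bers fiber: point-pushing the critical value $v_2$ around a loop $\gamma$ in the four-punctured sphere acts on $\T_B$ and changes the marking, and under $\sigma_f$ this pushing is intertwined with an \emph{infinite-order} mapping-class twisting of the points of $A$ produced by pulling back through $f$. Concretely, I would use the B\"ottcher coordinate of the first-return map $g^4$ near the superattracting cycle to define an integer winding number $\nu(\tau)$ measuring how the marked position of $v_2$ spirals about the cycle, show that $\nu$ is locally constant on $\Def_B^A(f)$, and then construct, for every integer $n$, a point of $\Def_B^A(f)$ with $\nu=n$ by a spinning (Dehn-twist) deformation that keeps the pullback relation satisfied while advancing the winding by one. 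Distinct values of $\nu$ lie in distinct components.

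The main obstacle is precisely this last step: producing for each $n$ a genuine solution of $\sigma_f=\sigma_A^B$ with the prescribed winding and proving the solutions are distinct. This requires controlling the transcendental pullback map $\sigma_f$ along a Bers fiber rather than relying on its germ-level description, and verifying that the twisting homomorphism $\gamma\mapsto(\text{pullback of }\mathrm{Push}(\gamma))$ really has infinite image acting without collapsing the winding --- equivalently, that $h_{\tau_A}$ assumes the value $\tau_A$ infinitely often. Establishing the covering and transversality statement of the second step (so that the component count is literally the fiber cardinality) is a secondary technical point, handled either by the transversality above or, should global properness fail, by working directly with the locally constant invariant $\nu$, which detects infinitely many components with no properness hypothesis.
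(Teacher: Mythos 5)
Your overall strategy---detect infinitely many components by an infinite-order twisting that preserves the pullback relation---is recognizably aimed at the same phenomenon the paper exploits (the paper's element $g=s_2cr_1s_1cr_2\in\hS$ is exactly such a twist), but your reduction in the second step is false as stated. The restriction $\pi=\sigma_A^B|_{\Def_B^A(f)}:\Def_B^A(f)\to\T_A$ is \emph{not} a covering of $\T_A$, because it is not surjective. In the paper's coordinates, the image of $\Def_B^A(f)$ in $\M_A$ is $\mu_A(\V)$, where $\V\cong\mathrm{Per}_4(0)^*$ is the diagonal $\{x=y\}$ punctured at ten points; only three of these punctures ($0,1,\infty$) are punctures of $\M_A$ itself. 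The remaining seven---for instance $x=y=1/2$, lying on the forbidden curve $y-1+x=0$ where the free critical value $z$ collides with $\infty\in B$---are honest interior points of $\M_A\cong\C-\{0,1\}$. Consequently the image of $\pi$ omits the entire (infinite, discrete) preimage of these seven points under the universal covering $\T_A\to\M_A$, so $\pi$ cannot be a covering onto the simply connected space $\T_A$, the trivialization $\Def_B^A(f)\cong\T_A\times\pi^{-1}(\tau_A)$ fails, and the component count is not the cardinality of a single fiber. (Separately, even local biholomorphicity of $\pi$ does not follow formally from Epstein's smoothness theorem: that theorem makes $D\sigma_f-D\sigma_A^B$ surjective but does not exclude vertical tangencies, i.e., vertical vectors in $\ker D\sigma_f$; this would also need an argument, though it is the lesser problem.)

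The third step, which you correctly identify as the heart of the matter, is precisely what is missing, and it is subtler than a locally constant winding number can capture. ``Spinning while keeping the pullback relation satisfied'' amounts to exhibiting a mapping class in the paper's group of special liftables $\S_f=\{g\in\L_f : \Phi_f(g)=\Phi_A^B(g)\}$, which requires an actual computation (the paper verifies $(\proj_x)_*(g)=(\proj_y)_*(g)$ using explicit coordinates on $\W\cong\C^2-\Delta$); and ``distinct components for distinct $n$'' is the statement that no nonzero power of the twist lies in $\E_f=i_*(\pi_1(\V,\bp_\V))$, the monodromy coming from the connected family $\V$. A naive integer invariant that is locally constant on $\Def_B^A(f)$ cannot separate the powers, because $\E_f$ itself contains loops with nontrivial winding about the degenerate configurations: in the paper's Zariski--van Kampen presentation, the generator $c$ of $\pi_1(\hV,\bp_\V)$ maps to an element with nonzero $\langle d\rangle$-component in the quotient $Q\cong\langle d\rangle\times\langle a,b\rangle$, so the obvious ``winding homomorphism'' does not vanish on $\E_f$. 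The paper circumvents this with a centrality argument: $q(g)=d^2$ is central in $Q$ of infinite order, while the image of $\hE=\widehat i_*(\pi_1(\hV,\bp_\V))$ has trivial center, whence $\langle g\rangle\cap\hE=1$ and the cosets $\gamma^n\E_f$ in $\S_f$ are distinct. Until you either repair the fiber-counting reduction (which I believe cannot be repaired in the form stated) or supply the group-theoretic separation of the twist from the monodromy $\E_f$, the proposal does not constitute a proof.
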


\noindent 
The proof of Theorem~\ref{mainthm}  reduces to a comparison of the stabilizer of $\Def_B^A(f)$ in the automorphism group
of Teichm\"uller space with the stabilizer of a connected component of $\Def_B^A(f)$. 

The Teichm\"uller space $\T_B$ is the universal cover of the {moduli space} $\M_B$ (the moduli space of ordered points on the Riemann sphere), and the group of deck transformations of the universal covering map $\T_B\to \M_B$ is naturally isomorphic to $\mod_B$, the {pure mapping class group} of $(\P^1,B)$. We introduce a certain subgroup $\S_f\subseteq \mod_B$ called the {special liftables}.  This group, which is the trivial group in the dynamical setting where $A=B$, plays a fundamental role in our study of $\Def_B^A(f)$. In Proposition~\ref{SProp}, we show that $\S_f$ consists of elements which restrict to automorphisms of $\Def_B^A(f)$. The space $\T_B$ comes with a canonical basepoint $\bp\in \Def_B^A(f)$. Let $\overline \bp$ be the image of $\bp$ under the quotient map $\Def_B^A(f)\to\Def_B^A(f)/\S_f$, and consider the homomorphism 
\[
\overline \iota_*: \pi_1\left(\Def_B^A(f)/\S_f,\overline \bp\right)\longrightarrow \pi_1\left(\T_B/\S_f,\overline\bp\right) 
\]
induced by inclusion. We establish Theorem~\ref{mainthm} by showing that the index of the image of $\overline \iota_*$ in $\pi_1\left(\T_B/\S_f, \overline\bp\right)$ is infinite.

\begin{remark} {\em The authors learned that T. Firsova, J. Kahn, and N. Selinger proved a related result in \cite{tanya}. Their work was completed independently, and at the same time that the authors proved Theorem \ref{mainthm}.}
\end{remark}

\noindent {\bf Outline.} The paper is organized as follows.  In Section~\ref{prelims}, we recall basic definitions and introduce subgroups 
$\S_f \subseteq \L_f \subseteq \mod_B$ associated to $\Def_B^A(f)$.  In Section~\ref{quotients}, we look at the image of $\Def_B^A(f)$ in 
an intermediate covering space $\W$ of $\M_B$, and reduce the problem of connectivity to a question about covering spaces and group actions.
Theorem~\ref{mainthm} is proved in Section~\ref{proof}.   Relevant general techniques for computing the fundamental group of complements of
plane curves, in particular real line arrangements, are recalled in Appendix~\ref{zvk-sec}.
\smallskip 

\noindent{\bf Acknowledgments.} We would like to thank Matthieu Astorg, Laurent Bartholdi, Adam Epstein, John Hubbard, and Curtis McMullen for helpful conversations related to this work. 

\section{Preliminaries}\label{prelims}

\noindent In this section, we review definitions and translate the problem of connectivity of $\Def_B^A(f)$ to a problem about groups.

Recall that a Riemann surface is a connected oriented topological
surface together with a {\em complex structure}:  a
maximal atlas of charts $\phi:U\to\C$ with holomorphic overlap maps.
For a given oriented, compact topological surface $X$, we denote the
set of all complex structures on $X$ by ${\mathcal{C}}(X)$. An orientation-preserving branched covering map $f:X\to
Y$ induces a map $f^*: {\mathcal{C}}(Y)\to {\mathcal{C}}(X)$; in particular,
for any orientation-preserving homeomorphism $\psi:X\to X$, there is an induced
map $\psi^*: {\mathcal{C}}(X)\to {\mathcal{C}}(X)$.

Let $A\subseteq X$ be finite. The Teichm\"uller space of $(X,A)$ is
\[{\mathrm {Teich}}(X,A):= {\mathcal{C}}(X)/{\sim_A}\]
where $c_1\sim_A c_2$ if and only if $c_1=\psi^*(c_2)$ for some
orientation-preserving homeomorphism $\psi:X\to X$ which is isotopic
to the identity relative to $A$.

\subsection{The forgetful map} If $A\subseteq B\subseteq X$ are finite sets then for $c_1,c_2\in {\mathcal C}(X)$, 
if  $c_1\sim_B c_2$ then  $c_1\sim_A c_2$.  
This allows us to define the {\em forgetful map} $\sigma_A^B:\mathrm{Teich}(X,B)\to \mathrm{Teich}(X,A)$ taking
the equivalence class of $c$ in $\mathrm{Teich}(X,B)$ to the equivalence class of $c$ in $\mathrm{Teich}(X,A)$.

\subsection{The pullback map} 
In view of the homotopy-lifting property, if
\begin{itemize}
\item $B\subseteq Y$ is finite and contains the critical values of $f$, and
\item $f(A)\subseteq B$,
\end{itemize}
then $f^*:{\mathcal{C}}(Y)\to{\mathcal{C}}(X)$ descends to a well-defined
map $\sigma_f$ between the corresponding Teichm\"uller spaces.
\[
\xymatrix{
 {\mathcal{C}}(Y)\ar[d]\ar[r]^{f^*} & {\mathcal{C}}(X) \ar[d] \\
 \text{Teich}(Y,B)\ar[r]^{\sigma_f} & \text{Teich}(X,A) }\]
 This map is known as the {\em pullback map} induced by $f$.

\subsection{Genus zero} Let $X$ be the 2-sphere and fix an identification $X = \P^1$.
Then the Teichm\"uller space $\T_A := \mathrm{Teich}(X,A)$ comes with a 
{\it canonical basepoint} $\bp:=[\mathrm{id}]$, and we use the 
Uniformization Theorem to
obtain the following description of $\T_A$. Given a finite set $A\subseteq \P^1$ containing at least 3 points,  the space $\T_A$ is the quotient of the
space of all orientation-preserving homeomorphisms  $\phi :
\P^1\rightarrow \P^1$ by the equivalence relation $\sim$ where
$\phi_1\sim\phi_2$  if there exists a M\"obius transformation $\nu$
such that $\nu\circ\phi_1=\phi_2$ on $A$, and $\nu\circ\phi_1$ is
isotopic to $\phi_2$ relative to $A$. The Teichm\"uller space  $\T_A$ is a complex manifold of dimension $|A|-3$. 

The {\em moduli space} of the pair $(\P^1,A)$ is the space of injective maps $\varphi:A\hookrightarrow \P^1$ modulo postcomposition by M\"obius transformations. The moduli space is  a complex manifold isomorphic to the complement of finitely many hyperplanes in $\C^{|A|-3}$. We denote the moduli space as $\M_A$. 

If $\phi$ represents an element of the Teichm\"uller space $\T_A$, the restriction $\phi\mapsto \phi|_A$ induces a universal covering map $\T_A\to \M_A$ which is a local biholomorphism with respect to the complex structures on $\T_A$ and $\M_A$. The group of deck transformations is naturally isomorphic to the {\em pure mapping class group}  $\mod_A$, the quotient of the group of orientation-preserving homeomorphisms $h:\P^1\to \P^1$ fixing $A$ pointwise by the subgroup of such maps isotopic to the identity relative to $A$. This group acts freely and properly discontinuously on $\T_A$. 

The forgetful map $\sigma_A^B:\T_B\to\T_A$ is a holomorphic surjective submersion and descends to the corresponding {forgetful map} on moduli space $\mu_A^B:\M_B\to \M_A$. 

\subsection{Admissible rational maps} We will say that the rational map  $f:(\P^1,A)\to (\P^1,B)$ is {\em admissible} if:
\begin{itemize}
\item $A$ and $B$ are finite sets, each containing at least 3 points,
\item $B$ contains the critical values of $f$, and
\item $f(A)\subseteq B$.
\end{itemize}
Let  $f:(\P^1,A)\to (\P^1,B)$  be an admissible rational map, let $\tau\in \T_B$ and let $\phi:\P^1\to \P^1$ be a homeomorphism
representing $\tau$. By the
Uniformization Theorem, there exist
\begin{itemize}
\item a 
homeomorphism $\psi:\P^1\to \P^1$ representing $\tau':=
\sigma_f(\tau)$, and
\item  a  rational map $F:\P^1\to \P^1$,
\end{itemize}
such that the following diagram commutes.
\[\xymatrix{
(\P^1,A)\ar[d]^{f}\ar[rr]^{\psi} &&
\bigl(\P^1,\psi(A)\bigr) \ar[d]^{F} \\
(\P^1,B)\ar[rr]^{\phi} && \bigl(\P^1,\phi(B)\bigr) }
\]
Conversely, if we have such a commutative diagram with $F$
holomorphic, then
\[\sigma_f(\tau)=\tau'\]
where $\tau\in\T_B$ and $\tau'\in \T_A$ are the
equivalence classes of $\phi$ and $\psi$ respectively. The map $\sigma_f:\T_B\to\T_A$ is holomorphic. 

\subsection{The liftables.}\label{liftable}  Let $f:(\P^1,A)\to (\P^1,B)$ be an admissible rational map, and let \linebreak $h:(\P^1,B)\to (\P^1,B)$ be an orientation-preserving homeomorphism that fixes $B$ pointwise. 
We say that $h$ is {\em liftable} if there is an orientation-preserving homeomorphism $h':(\P^1,A)\to(\P^1,A)$ fixing $A$ pointwise so that $h\circ f = f\circ h'$.  The lift $h'$ is only determined up to deck transformations of $f$
that preserve $A$.
Let
 \[
\L_f:=\{[h]\in\mod_B\;|\;h\text{ is liftable}\}
\]
be the subgroup of {\em liftable mapping classes}, or just the {\em liftables} associated to $f$.
As proved in Proposition 3.1 of \cite{kps}, the subgroup $\L_f$ has finite index in $\mod_B$, and there is a {\em lifting homomorphism}
\begin{eqnarray*}
\Phi_f:\L_f &\rightarrow&\mod_A\\
{[h]} &\mapsto& {[h']}
\end{eqnarray*}
which is well-defined because the action of $\mod_A$ is free on $\T_A$. 
%

\subsection{The special liftables.}\label{special} Let $f:(\P^1,A)\to (\P^1,B)$ be an admissible rational map. We now work in the case where domain and range of $f$ are identified and $A\subseteq B$. We define a subgroup $\S_f \subseteq \L_f$ that preserves $\Def_B^A(f)$.

Because $A\subseteq B$, there is a {\em forgetful homomorphism} $\Phi_A^B:\mod_B\to\mod_A$ corresponding to forgetting points in $B-A$. The subgroup of {\em special liftable mapping classes}, or just the {\em special liftables} associated to $f:(\P^1,A)\to (\P^1,B)$ is defined to be
\[
\S_f:=\{g\in \L_f\;|\;\Phi_f(g)=\Phi_A^B(g)\}, 
\]
the equalizer of the homomorphisms $\Phi_f$ and $\Phi_A^B$. Not much is known about this subgroup in general. In the purely dynamical setting where $A=B$ and $f$ is not a Latt\`es map, it follows from Theorem \ref{rigidity} that $\S_f$ is the trivial group; in particular, it has infinite index in $\mod_B$. 

\begin{proposition}\label{SProp} Let $g\in \L_f$. The following are equivalent:
\begin{enumerate}
\item $g\cdot\Def_B^A(f)\cap \Def_B^A(f)\neq \emptyset$,
\item $g\cdot \Def_B^A(f)=\Def_B^A(f)$, and 
\item $g\in \S_f$.
\end{enumerate}
\end{proposition}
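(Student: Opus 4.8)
The plan is to prove the cyclic chain of implications $(3)\Rightarrow(2)\Rightarrow(1)\Rightarrow(3)$, the engine of which is a pair of equivariance identities relating the two maps $\sigma_A^B,\sigma_f:\T_B\to\T_A$ to the homomorphisms $\Phi_A^B$ and $\Phi_f$:
\[
\sigma_A^B(g\cdot\tau)=\Phi_A^B(g)\cdot\sigma_A^B(\tau)\quad(g\in\mod_B),\qquad
\sigma_f(g\cdot\tau)=\Phi_f(g)\cdot\sigma_f(\tau)\quad(g\in\L_f),
\]
valid for all $\tau\in\T_B$. The first identity I would read off directly from the definition of the forgetful map: since $A\subseteq B$, any representative $h$ of $g$ fixing $B$ pointwise also fixes $A$ pointwise and hence represents $\Phi_A^B(g)$, so the deck action of $g$ on $\T_B$ and that of $\Phi_A^B(g)$ on $\T_A$ are realized by the same homeomorphism. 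The second identity I would obtain by chasing the commutative square defining $\sigma_f$: if $\phi$ represents $\tau$ and $(\psi,F)$ realizes $\sigma_f(\tau)$ via $\phi\circ f=F\circ\psi$ with $F$ rational, and if $h$ is liftable with lift $h'$ (so $h\circ f=f\circ h'$), then from $h^{-1}\circ f=f\circ(h')^{-1}$ one checks that a representative $\phi\circ h^{-1}$ of $g\cdot\tau$ satisfies $(\phi\circ h^{-1})\circ f=F\circ\bigl(\psi\circ(h')^{-1}\bigr)$; thus $\bigl(\psi\circ(h')^{-1},F\bigr)$ realizes $\sigma_f(g\cdot\tau)$, and reading off the top row gives $\sigma_f(g\cdot\tau)=\bigl[\psi\circ(h')^{-1}\bigr]=\Phi_f(g)\cdot\sigma_f(\tau)$. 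This is where the definition of $\Phi_f$ is used in an essential way.

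Granting these identities, the implications are short. For $(3)\Rightarrow(2)$, suppose $g\in\S_f$, so $\Phi_f(g)=\Phi_A^B(g)$; for $\tau\in\Def_B^A(f)$ one has $\sigma_f(\tau)=\sigma_A^B(\tau)$, and combining this with the two identities yields $\sigma_f(g\cdot\tau)=\Phi_f(g)\cdot\sigma_f(\tau)=\Phi_A^B(g)\cdot\sigma_A^B(\tau)=\sigma_A^B(g\cdot\tau)$, so $g\cdot\tau\in\Def_B^A(f)$; hence $g\cdot\Def_B^A(f)\subseteq\Def_B^A(f)$, and since $\S_f$ is a subgroup the same applies to $g^{-1}$, yielding equality. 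The implication $(2)\Rightarrow(1)$ is immediate because $\Def_B^A(f)$ is nonempty, containing the basepoint $\bp$.

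The implication $(1)\Rightarrow(3)$ is the heart of the argument. Assume $\tau$ and $g\cdot\tau$ both lie in $\Def_B^A(f)$, and set $s:=\sigma_f(\tau)\in\T_A$. Using $\tau\in\Def_B^A(f)$ together with the two equivariance identities,
\[
\Phi_f(g)\cdot s=\sigma_f(g\cdot\tau)=\sigma_A^B(g\cdot\tau)=\Phi_A^B(g)\cdot\sigma_A^B(\tau)=\Phi_A^B(g)\cdot s,
\]
so the element $\Phi_A^B(g)^{-1}\Phi_f(g)\in\mod_A$ fixes the point $s\in\T_A$. Because $\mod_A$ acts freely on $\T_A$, this element must be the identity, whence $\Phi_f(g)=\Phi_A^B(g)$ and $g\in\S_f$.

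I expect the only step with genuine content to be the pullback equivariance identity: the diagram chase is mechanical but demands care with the action convention and the placement of inverses, and it is exactly the compatibility built into the definition of the lifting homomorphism $\Phi_f$. The decisive hypothesis, however, is the freeness of the $\mod_A$-action on $\T_A$, which is what converts ``fixes a point of $\T_A$'' into ``is the trivial element''; without it the equalizer $\S_f$ would only control stabilizers up to torsion, and the clean equivalence would fail.
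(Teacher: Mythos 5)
Your proposal is correct and follows essentially the same route as the paper: both arguments run on the two equivariance identities $\sigma_A^B(g\cdot\tau)=\Phi_A^B(g)\cdot\sigma_A^B(\tau)$ and $\sigma_f(g\cdot\tau)=\Phi_f(g)\cdot\sigma_f(\tau)$, and both convert ``$g\cdot\tau$ and $\tau$ lie in $\Def_B^A(f)$'' into $\Phi_f(g)=\Phi_A^B(g)$ via the freeness of the pure mapping class group action on $\T_A$. Your only additions are the explicit diagram chase verifying the pullback equivariance identity (which the paper simply asserts) and the correct attribution of freeness to the $\mod_A$-action on $\T_A$, where the paper's proof contains a harmless slip in writing $\mod_B$.
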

\begin{proof}
The proof is purely formal and uses the following two equations:
\begin{itemize}
\item\label{eqn1} for all $h \in \mod_B$ and for all $\tau\in \T_B$,
\[
\sigma_A^B(h\cdot \tau)=\Phi_A^B(h)\cdot\sigma_A^B(\tau),
\]
\item\label{eqn2} for all $h\in \L_f$ and  for all $\tau\in\T_B$,
\[
\sigma_f(h\cdot \tau)=\Phi_f(h)\cdot \sigma_f(\tau).
\]
\end{itemize}
Let $\tau \in \Def_B^A(f)$. Since $g\in \L_f$, we have 
\[
\sigma_f(g\cdot \tau)=\sigma_A^B(g\cdot \tau) \quad \text{if and only if}\quad
\Phi_f(g)\cdot\sigma_f(\tau) =\Phi_A^B(g)\cdot\sigma_A^B(\tau).
\]
Because $\tau\in\Def_B^A(f)$, $\sigma_f(\tau)=\sigma_A^B(\tau)$.   Since elements of $\mod_B$ have no fixed points,
we have, for any $\tau' \in \T_A$,
\[
\Phi_f(g)\cdot\tau' =\Phi_A^B(g)\cdot\tau' \quad \text{ if and only if} \quad \Phi_f(g)=\Phi_A^B(g) \quad \text{ if and only if} \quad
g\in\S_f.
\]
\end{proof}

\section{Quotients}\label{quotients}

\noindent 
Throughout the rest of this paper, let $f:(\P^1,A)\to (\P^1,B)$ be an admissible rational map so that $A\subseteq B$, and 
assume that $f$ is not of Latt\`es type. 

The quotient $\W:=\T_B/\L_f$ is a connected complex manifold of dimension $|B|-3$, and the quotient $\Def_B^A(f)/\S_f$ is a (possibly disconnected) complex submanifold of $\T_B/\S_f$ of dimension $|B-A|$.  The space $\W$ comes equipped with maps 
$$
\mu_B:\W\to \M_B\quad \text{and} \quad \mu_A:\W\to \M_A
$$
 so that the diagram below, excluding the dashed arrow, commutes. 

\begin{eqnarray}\label{bigdiagram}
\xymatrix{
\Def_B^A(f) \ar @{^{(}->}[r]^\iota\ar[d] &\T_B\ar[rr]^{\sigma_f}\ar[d]^q & &\T_A\ar[ddd]\\
\Def_B^A(f)/\S_f  \ar @{^{(}->}[r]^{\overline{\iota}}  &{\T_B/\S_f}\ar[d]^\omega  & &\\
 &\W:=\T_B/\L_f\ar[d]^{\mu_B}\ar[rrd]^{\mu_A} & &\\
 &\M_B \ar@{-->}[rr]_{\mu_A^B}&&\M_A}
\end{eqnarray}

\noindent The map $\mu_B:\W\to\M_B$ is a finite cover and the map $\mu_A:\W\to\M_A$ can be just about anything; for example, it may be constant \cite{bekp}. 
It follows from Proposition \ref{SProp}, that $\omega\circ \overline\iota$ is injective. Let $\V\subseteq \W$ denote its image. The space $\V$ is a subset of the equalizer of the pair of maps $\mu_A:\W\to\M_A$ and $\mu_A^B \circ \mu_B:\W\to\M_A$; that is 
\begin{eqnarray}\label{diag}
\V\subseteq \{w\in \W\;|\;\mu_A(w)=\mu_A^B \circ \mu_B(w)\}.
\end{eqnarray}

\subsection{Fundamental groups} In this section, we review some notions from the theory of covering spaces that will be required for the proof of Theorem \ref{mainthm}. 

 The canonical
basepoint $\bp$ of $\T_B$ lies in $\Def_B^A(f)$ and determines 
basepoints
\[
\bp_\V\in \V\subseteq \W,\quad \bp_B\in\M_B,\quad \text{and}\quad \bp_A\in\M_A.
\]
\noindent Let $i:\V\hookrightarrow \W$  be the inclusion. We have the following commutative diagram.
\[
\xymatrix{
(\Def_B^A(f)/\S_f,\overline\bp)\ar[d]^{\omega\circ \overline\iota}\ar @{^{(}->}[r]^{\overline\iota} &(\T_B/\S_f,\overline\bp)\ar[d]^\omega\\
(\V,\bp_\V)\ar @{^{(}->}[r]^i &(\W,\bp_\V)
}
\]
It follows from Proposition \ref{SProp} the map $\omega\circ\overline\iota$ is a homeomorphism.   

The induced diagram on fundamental groups is:
\[
\xymatrix{
\pi_1(\Def_B^A(f)/\S_f,\overline\bp)\ar[r]^{\overline\iota_*}\ar[d]^{({\omega\circ \overline\iota})_*} &\pi_1(\T_B/\S_f,\overline\bp)\ar[d]^{\omega_*}\\
\pi_1(\V,\bp_\V)\ar[r]^{i_*} &\pi_1(\W,\bp_\V)
}
\]
where $(\omega\circ \overline\iota)_*$ is an isomorphism.
If $\V$ is disconnected, $\pi_1(\V,\bp_\V)$ is defined to be the fundamental group of the connected component containing $\bp_\V$. 

As a consequence we have the following result. 
 \begin{proposition}
 The map $\overline\iota_*$ is injective (respectively surjective) if and only if $i_*$ is injective (respectively surjective). 
 \end{proposition}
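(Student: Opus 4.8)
The plan is to pass from the commutative square of spaces to the induced square on fundamental groups, which is already displayed above, and then to deduce both equivalences by a purely formal diagram chase. I would isolate two structural inputs. First, the left vertical map $(\omega\circ\overline\iota)_*$ is an isomorphism; this is recorded just above and comes from the fact that $\omega\circ\overline\iota$ is a homeomorphism of $\Def_B^A(f)/\S_f$ onto $\V$, which in turn follows from Proposition~\ref{SProp}. Second, the right vertical map $\omega_*$ is injective. Granting these, together with the commutativity relation
\[
\omega_*\circ\overline\iota_* = i_*\circ(\omega\circ\overline\iota)_*,
\]
everything will follow from elementary manipulations with injective and surjective homomorphisms.

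The first thing I would establish is that $\omega_*$ is injective. Here I would invoke the covering-space description of the quotients: the space $\T_B$ is contractible, and the subgroups $\S_f\subseteq\L_f\subseteq\mod_B$ act freely and properly discontinuously on it. Hence $\T_B/\S_f$ and $\W=\T_B/\L_f$ are aspherical with fundamental groups canonically identified with $\S_f$ and $\L_f$ respectively, and under these identifications $\omega$ is exactly the covering map associated to the inclusion $\S_f\subseteq\L_f$. Consequently $\omega_*$ is that inclusion, which is injective. I would be careful to check that the basepoint $\bp_\V$ on the bottom row is the image of $\overline\bp$, so that all four groups are based compatibly; this is precisely how $\bp_\V$ was defined.

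With both inputs in hand I would run the chase. For injectivity: if $i_*$ is injective, then $i_*\circ(\omega\circ\overline\iota)_*$ is injective as a composite of injective maps, so by the displayed relation $\omega_*\circ\overline\iota_*$ is injective, whence $\overline\iota_*$ is injective; conversely, if $\overline\iota_*$ is injective, then $\omega_*\circ\overline\iota_*$ is injective because $\omega_*$ is, so $i_*\circ(\omega\circ\overline\iota)_*$ is injective, and since $(\omega\circ\overline\iota)_*$ is surjective this forces $i_*$ to be injective. For surjectivity I would argue through images: because $(\omega\circ\overline\iota)_*$ is surjective, the displayed relation gives $\mathrm{im}\,i_* = \omega_*\bigl(\mathrm{im}\,\overline\iota_*\bigr)$. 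Since $\omega_*$ is injective it restricts to an isomorphism from $\mathrm{im}\,\overline\iota_*$ onto $\mathrm{im}\,i_*$ and identifies $\pi_1(\T_B/\S_f)$ with $\omega_*\bigl(\pi_1(\T_B/\S_f)\bigr)$; under this identification $\overline\iota_*$ fills up its target exactly when $i_*$ fills up the corresponding target, which is the asserted surjectivity equivalence.

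The main obstacle is the second structural input, namely the injectivity of $\omega_*$: once this is in place, both equivalences are formal consequences of having an isomorphism on one vertical side and an injection on the other. Making the injectivity of $\omega_*$ rigorous is where the real content lies, since it rests on the covering-space identifications $\pi_1(\T_B/\S_f)\cong\S_f$ and $\pi_1(\W)\cong\L_f$ and on a careful bookkeeping of the basepoints along the bottom row; with that settled, the diagram chase above is routine.
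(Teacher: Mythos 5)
Your proposal follows essentially the same route as the paper: the paper offers no written proof of this proposition, presenting it as an immediate consequence of the displayed commutative square on fundamental groups in which $(\omega\circ\overline\iota)_*$ is an isomorphism, with the injectivity of $\omega_*$ left implicit in the covering-space identifications recorded in Proposition~\ref{diag1} (where $\omega:\T_B/\S_f\to\W=\T_B/\L_f$ is the covering associated to $\S_f\subseteq\L_f$, so that $\pi_1(\W,\bp_\V)=\L_f$ and $\omega_*(\pi_1(\T_B/\S_f,\overline\bp))=\S_f$). You make both structural inputs explicit, and your injectivity chase is airtight in both directions.

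There is, however, a genuine issue in your surjectivity half. Your final move --- ``under this identification $\overline\iota_*$ fills up its target exactly when $i_*$ fills up the corresponding target'' --- silently replaces the actual target $\pi_1(\W,\bp_\V)=\L_f$ of $i_*$ by the subgroup $\omega_*\bigl(\pi_1(\T_B/\S_f,\overline\bp)\bigr)=\S_f$. What your argument really proves is: $\overline\iota_*$ is surjective if and only if $\mathrm{im}\,i_*=\S_f$, i.e. $\E_f=\S_f$. That coincides with ``$i_*$ surjective'' only when $\omega_*$ is also surjective, i.e. $\S_f=\L_f$, which fails in general: in the dynamical case $A=B$ with $f$ not Latt\`es and $|B|\geq 4$, the group $\S_f$ is trivial, $\Def_B^A(f)$ is a single point, so $\overline\iota_*$ maps the trivial group onto $\pi_1(\T_B,\bp)=1$ and is vacuously surjective, while $i_*$ maps the trivial group into $\L_f\neq 1$ and is not. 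So the implication ``$\overline\iota_*$ surjective $\Rightarrow i_*$ surjective'' cannot be obtained formally from your two inputs (the converse does follow: $\mathrm{im}\,i_*=\L_f$ forces $\S_f=\L_f$, making $\omega_*$ an isomorphism, whence $\overline\iota_*$ is surjective). To be fair, this imprecision is inherited from the statement itself, and the reading your chase actually establishes --- comparing $\E_f$ with $\S_f$ rather than with $\L_f$ --- is exactly the one the paper uses downstream in Proposition~\ref{cosets-prop} and Corollary~\ref{cosets}. You should either prove that weaker (intended) statement explicitly, or flag that the literal surjectivity equivalence needs the additional hypothesis $\S_f=\L_f$.
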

\begin{proposition} \label{diag1} The basepoints determined by $\bp$ define identifications
\begin{eqnarray*}
\mod_B &=& \pi_1(\M_B,\bp_B)\\
\mod_A &=&  \pi_1(\M_A,\bp_A)\\
 \L_f &=& \pi_1(\W,\bp_\V)  \\
  \S_f   &=& \omega_*( \pi_1(\T_B/\S_f,\overline\bp)) \subseteq \L_f\\
\end{eqnarray*}
such that 
\[
\Phi_f = (\mu_A)_*,\quad  \Phi_A^B = (\mu_A^B)_*,\quad \text{and}\quad \Phi_A^B|_{\L_f} = \left(\mu_A^B \circ \mu_B\right)_*.
\]
\end{proposition}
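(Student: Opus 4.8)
The plan is to derive every identification from the standard dictionary between regular covering spaces and their deck groups, applied to the two towers sitting over $\M_B$ and over $\M_A$. The organizing principle I would isolate first is an elementary \emph{equivariant lift lemma}: if $p:\widetilde X\to X$ and $q:\widetilde Y\to Y$ are regular covers with simply connected total spaces, deck groups $G_X$ and $G_Y$, and chosen basepoints $\widetilde x_0\mapsto x_0$ and $\widetilde y_0\mapsto y_0$, and if $\widetilde F:\widetilde X\to\widetilde Y$ is continuous with $\widetilde F(\widetilde x_0)=\widetilde y_0$, equivariant over a homomorphism $\rho:G_X\to G_Y$ (that is, $\widetilde F(g\cdot\widetilde x)=\rho(g)\cdot\widetilde F(\widetilde x)$), and descends to $F:X\to Y$, then under the canonical isomorphisms $\pi_1(X,x_0)\cong G_X$ and $\pi_1(Y,y_0)\cong G_Y$ one has $F_*=\rho$. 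The proof is the usual lift-a-loop computation: a loop representing $g\in G_X$ lifts to a path from $\widetilde x_0$ to $g\cdot\widetilde x_0$, whose image under $\widetilde F$ runs from $\widetilde y_0$ to $\rho(g)\cdot\widetilde y_0$ and hence represents $\rho(g)$.

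The first three identifications are then immediate instances of the deck-group dictionary. Since $\T_B$ is simply connected and $\mod_B$ (hence its finite-index subgroup $\L_f$) acts freely and properly discontinuously, both $\T_B\to\M_B$ and $\T_B\to\W=\T_B/\L_f$ are universal covers, giving $\mod_B\cong\pi_1(\M_B,\bp_B)$ and $\L_f\cong\pi_1(\W,\bp_\V)$ via the basepoint $\bp$; likewise $\T_A\to\M_A$ gives $\mod_A\cong\pi_1(\M_A,\bp_A)$, where the basepoint of $\T_A$ over $\bp_A$ is the canonical basepoint. For the fourth identification I would invoke the subgroup correspondence for the tower $\T_B\to\T_B/\S_f\xrightarrow{\;\omega\;}\W$: since $\T_B$ is the common universal cover and $\S_f\subseteq\L_f$, the intermediate cover $\T_B/\S_f$ corresponds exactly to $\S_f\subseteq\L_f=\pi_1(\W,\bp_\V)$, so $\omega_*\bigl(\pi_1(\T_B/\S_f,\overline\bp)\bigr)=\S_f$, with $\overline\bp$ the image of $\bp$.

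For the homomorphism identities I would apply the lemma with the two equivariance equations already recorded in the proof of Proposition~\ref{SProp}. Taking $\widetilde F=\sigma_f:\T_B\to\T_A$, which is $\L_f$-equivariant over $\Phi_f$ and descends to $\mu_A$, yields $(\mu_A)_*=\Phi_f$; taking $\widetilde F=\sigma_A^B:\T_B\to\T_A$, which is $\mod_B$-equivariant over $\Phi_A^B$ and descends to $\mu_A^B$, yields $(\mu_A^B)_*=\Phi_A^B$. Finally, $\mu_B:\W\to\M_B$ is covered by the identity $\T_B\to\T_B$, equivariant over the inclusion $\L_f\hookrightarrow\mod_B$, so $(\mu_B)_*$ is this inclusion and $(\mu_A^B\circ\mu_B)_*=(\mu_A^B)_*\circ(\mu_B)_*=\Phi_A^B|_{\L_f}$.

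The steps are individually routine, so the real work is bookkeeping, and that is where I expect the only genuine obstacle. Two points demand care. First, basepoint consistency: the identities force the point of $\T_A$ lying over $\bp_A$ to be simultaneously $\sigma_f(\bp)$ and $\sigma_A^B(\bp)$, which is legitimate precisely because $\bp\in\Def_B^A(f)$, so that $\sigma_f(\bp)=\sigma_A^B(\bp)$ equals the canonical basepoint of $\T_A$; I would verify at the outset that $\mu_A(\bp_\V)=\mu_A^B(\mu_B(\bp_\V))=\bp_A$, so that every induced map is correctly based. Second, one must pin down the convention for the isomorphism $\pi_1\cong$ deck group (lift a loop from the chosen basepoint and read off the deck transformation carrying $\bp$ to the lifted endpoint) and use a left action throughout, so that the correspondence is a genuine group homomorphism in the correct direction rather than an anti-homomorphism. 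With the conventions fixed consistently across all four covers, the displayed equalities follow exactly as stated.
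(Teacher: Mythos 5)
Your proposal is correct and follows essentially the same route as the paper: the paper's proof is exactly the lift-a-loop deck-group computation you isolate in your equivariant lift lemma, carried out explicitly for $\Phi_f=(\mu_A)_*$ (lift a loop $\gamma$ in $\W$ to a path from $\bp$ to $g\cdot\bp$, push it forward by $\sigma_f$, and use commutativity of the big diagram together with $\sigma_f(g\cdot\bp)=\Phi_f(g)\cdot\sigma_f(\bp)$ and freeness of the $\mod_A$-action), with the remaining identities declared immediate from the definitions. Your only additions --- packaging the computation as a reusable lemma, treating $\mu_B$ via the identity on $\T_B$ over the inclusion $\L_f\hookrightarrow\mod_B$, and the explicit basepoint check $\sigma_f(\bp)=\sigma_A^B(\bp)$ using $\bp\in\Def_B^A(f)$ --- are sound bookkeeping, not a different argument.
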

\begin{proof}
The proofs that 
\[
\Phi_A^B = (\mu_A^B)_*\quad \text{and}\quad \Phi_A^B|_{\L_f} = \left(\mu_A^B \circ \mu_B\right)_*
\]
are immediate from the definitions and from the identifications above. We will  prove that $\Phi_f = (\mu_A)_*$. Let $\gamma$ be an oriented loop in $\W$ based at $\bp_\V$, and set $\alpha:=\mu_A(\gamma)$, an oriented loop in $\M_A$ based at $\bp_A$. By the identifications above, $[\gamma]\in \pi_1(\W,\bp_\V)$ determines some $g\in \L_f$, and $[\alpha]\in\pi_1(\M_A,\bp_A)$ determines some $h\in \mod_A$. Because Diagram (\ref{bigdiagram}) commutes, we have 
\[
\sigma_f(g\cdot \bp)=h\cdot \sigma_f(\bp).
\]
Because $g\in \L_f$, we must have $h=\Phi_f(g)$. 
\end{proof}

\begin{proposition}\label{diag2} We have 
\[
\S_f
=\{\gamma\in \L_f\;|\; (\mu_A)_*(\gamma)=\left(\mu_A^B \circ \mu_B\right)_*(\gamma)\}.
\]
\end{proposition}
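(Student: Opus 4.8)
The plan is to read off Proposition~\ref{diag2} as a direct translation of the \emph{definition} of $\S_f$ through the dictionary supplied by Proposition~\ref{diag1}. First I would recall from Section~\ref{special} that $\S_f$ is by definition the equalizer of the lifting homomorphism $\Phi_f:\L_f\to\mod_A$ and the forgetful homomorphism $\Phi_A^B:\mod_B\to\mod_A$, that is,
\[
\S_f=\{g\in \L_f\;|\;\Phi_f(g)=\Phi_A^B(g)\},
\]
where $\Phi_A^B$ is tacitly restricted to $\L_f$ since $\S_f\subseteq\L_f$ and $\Phi_f$ is only defined on $\L_f$.

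Next I would invoke Proposition~\ref{diag1}, which, under the identifications $\L_f=\pi_1(\W,\bp_\V)$ and $\mod_A=\pi_1(\M_A,\bp_A)$ determined by the canonical basepoint $\bp$, records the two equalities of homomorphisms $\L_f\to\mod_A$
\[
\Phi_f=(\mu_A)_*\qquad\text{and}\qquad \Phi_A^B|_{\L_f}=\left(\mu_A^B\circ\mu_B\right)_*.
\]
Substituting these into the defining condition $\Phi_f(g)=\Phi_A^B(g)$ replaces $\Phi_f(g)$ by $(\mu_A)_*(g)$ and $\Phi_A^B(g)$ by $\left(\mu_A^B\circ\mu_B\right)_*(g)$, which yields exactly
\[
\S_f=\{\gamma\in \L_f\;|\; (\mu_A)_*(\gamma)=\left(\mu_A^B \circ \mu_B\right)_*(\gamma)\}.
\]

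The only point requiring attention is the bookkeeping of basepoints: one must check that all four homomorphisms are being compared as maps with the same source $\pi_1(\W,\bp_\V)$ and the same target $\pi_1(\M_A,\bp_A)$, so that an equality of elements in $\mod_A$ corresponds to an equality in $\pi_1(\M_A,\bp_A)$. This is not really an obstacle, since Proposition~\ref{diag1} already furnishes these identifications with a common system of basepoints induced by $\bp$; hence no further work is needed and the statement follows formally. I do not expect any genuine difficulty here, as the content is entirely contained in the preceding two propositions.
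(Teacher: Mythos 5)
Your proposal is correct and matches the paper's argument exactly: the paper proves Proposition~\ref{diag2} simply as a direct consequence of Proposition~\ref{diag1}, and your write-up just spells out the same substitution of $\Phi_f=(\mu_A)_*$ and $\Phi_A^B|_{\L_f}=\left(\mu_A^B\circ\mu_B\right)_*$ into the definition of $\S_f$ as an equalizer. Your attention to the basepoint identifications is a reasonable bit of diligence, but it is already built into Proposition~\ref{diag1}, as you note.
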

\begin{proof}
This is a direct consequence of Proposition \ref{diag1}. 
\end{proof}

\subsection{A criterion for connectivity}
In our particular family of examples, $\V$ is connected (see Section~\ref{spaceV}), and hence the question arises whether or not
$\Def_B^A(f)$ is also connected.
Let 
$$
\E_f:=i_*\left(\pi_1(\V,\bp_\V)\right)\subseteq \S_f.
$$
In general we have the following.

\begin{proposition}\label{cosets-prop}  Suppose that $\V$ is connected. There is a bijection between the connected components of $\Def_B^A(f)$ and the (left) cosets of $\E_f$ in $\S_f$.
\end{proposition}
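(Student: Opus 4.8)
The plan is to realize $\Def_B^A(f)$ as the total space of a covering of the connected base $\V$, and then to read off its connected components from a group action. First I would observe that, by Proposition~\ref{SProp}, the group $\S_f$ acts on $\Def_B^A(f)$; since $\S_f\subseteq\mod_B$ and $\mod_B$ acts freely and properly discontinuously on $\T_B$, this restricted action is again free and properly discontinuous, so the quotient map $q_0\colon \Def_B^A(f)\to\Def_B^A(f)/\S_f$ is a covering map. Composing with the homeomorphism $\omega\circ\overline\iota\colon \Def_B^A(f)/\S_f\xrightarrow{\ \sim\ }\V$ yields a covering $p\colon \Def_B^A(f)\to\V$ with deck group $\S_f$, whose fiber over $\bp_\V$ is the free orbit $\S_f\cdot\bp$. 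The key compatibility I would record is that $p$ is precisely the restriction to $\Def_B^A(f)$ of the universal covering $\T_B\to\W$: this holds because $q_0$ is the restriction of $\T_B\to\T_B/\S_f$ and $\omega\circ\overline\iota$ is induced by $\omega$, so that $(\T_B\to\W)|_{\Def_B^A(f)}=p$.

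Next I would exploit the hypothesis that $\V$ is connected. Since $\Def_B^A(f)$ is a smooth manifold it is locally connected, $\S_f$ acts by homeomorphisms, and the quotient map is open; were the components of $\Def_B^A(f)$ to split into two nonempty $\S_f$-invariant families, their images would disconnect $\V$, a contradiction. Hence $\S_f$ acts transitively on the set of connected components of $\Def_B^A(f)$. Letting $D_0$ denote the component containing $\bp$, the orbit--stabilizer correspondence then produces a bijection between the components of $\Def_B^A(f)$ and the left cosets of $\mathrm{Stab}_{\S_f}(D_0)$ in $\S_f$.

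It remains to identify $\mathrm{Stab}_{\S_f}(D_0)$ with $\E_f$, which I expect to be the main point. Fix $g\in\S_f$. Since $g\cdot D_0$ is again a component, $g$ stabilizes $D_0$ if and only if $g\cdot\bp\in D_0$, i.e.\ if and only if $\bp$ and $g\cdot\bp$ are joined by a path in $\Def_B^A(f)$. Applying $p$ to such a path produces a loop $\gamma$ in $\V$ based at $\bp_\V$; conversely, the $p$-lift of any such loop starting at $\bp$ is a path in $\Def_B^A(f)$ from $\bp$ to a point of the fiber $\S_f\cdot\bp$. Using the compatibility above, this $p$-lift coincides with the lift of $\gamma$, viewed as a loop in $\W$, to the universal cover $\T_B$ starting at $\bp$, whose endpoint is $i_*[\gamma]\cdot\bp$ under the identification $\pi_1(\W,\bp_\V)=\L_f$ of Proposition~\ref{diag1}. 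Because $\L_f$ acts freely on $\T_B$, it follows that $g\cdot\bp$ and $\bp$ lie in the same component exactly when $g=i_*[\gamma]$ for some loop $\gamma$ in $\V$; that is, $\mathrm{Stab}_{\S_f}(D_0)=i_*\!\left(\pi_1(\V,\bp_\V)\right)=\E_f$.

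Combining the last two paragraphs yields the desired bijection between the connected components of $\Def_B^A(f)$ and the left cosets of $\E_f$ in $\S_f$. The delicate points, on which I would spend the most care, are verifying that $p$ is genuinely the restriction of the universal cover $\T_B\to\W$ (so that path-lifting inside $\Def_B^A(f)$ agrees with path-lifting in $\T_B$) and tracking the left/right conventions so that the monodromy endpoint is exactly $i_*[\gamma]\cdot\bp$; given these, the transitivity of the $\S_f$-action on $\pi_0\!\left(\Def_B^A(f)\right)$ and the orbit--stabilizer count are routine.
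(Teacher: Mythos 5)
Your proof is correct and takes essentially the same route as the paper: both realize $p=\omega\circ q\circ\iota\colon \Def_B^A(f)\to\V$ as a covering with deck group $\S_f$, identify the fiber over $\bp_\V$ with the free orbit $\S_f\cdot\bp$, and use path-lifting at $\bp$ to show that two fiber points lie in the same connected component exactly when the corresponding left $\E_f$-cosets agree. Your orbit--stabilizer packaging is just a more detailed organization of the paper's one-paragraph argument, and it usefully makes explicit two points the paper leaves implicit: that $\S_f$ acts transitively on $\pi_0\bigl(\Def_B^A(f)\bigr)$ (equivalently, that every component meets the fiber over $\bp_\V$), and that lifting through $p$ agrees with lifting through the universal cover $\T_B\to\W$.
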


\begin{proof} The fibers of the covering map  $p:=\omega \circ q\circ \iota: \Def_B^A(f) \rightarrow  \V$ 
are in bijective
correspondence with $\S_f$.  In particular, path-lifting at $\bp$ defines a bijection
$$
\beta_S: \S_f \rightarrow p^{-1}(\bp_\V),
$$
and an injection
$$
\beta_E: \E_f \rightarrow p^{-1}(\bp_\V),
$$
whose image is contained in a connected component of $p^{-1}(\V)$.

For two elements of $\gamma,\gamma' \in \S_f$, $\beta(\gamma)$ and $\beta(\gamma')$ lie in the same
component of $\Def_B^A(f)$ if and only if $\gamma' = \gamma \circ \alpha$ for some $\alpha \in \E_f$,
that is, if and only if the cosets are equal:
$$
\gamma' \cdot \E_f = \gamma\cdot \E_f.
$$
\end{proof}

\noindent
As an easy consequence, we have the following.
\begin{corollary}\label{cosets}
Suppose that $\V$ is connected. Then $\Def_B^A(f)$ is connected if and only if $\E_f=\S_f$. 
\end{corollary}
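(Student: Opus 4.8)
The plan is to derive this directly from Proposition~\ref{cosets-prop}, treating it as a purely formal consequence once the bijection between components and cosets is in hand. First I would recall that a space is connected precisely when it has exactly one connected component. Applying this to $\Def_B^A(f)$, the assertion that $\Def_B^A(f)$ is connected is equivalent to the statement that its set of connected components is a singleton.

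Next I would invoke Proposition~\ref{cosets-prop}, which, under the standing hypothesis that $\V$ is connected, furnishes a bijection between the connected components of $\Def_B^A(f)$ and the left cosets of $\E_f$ in $\S_f$. Since $\E_f := i_*\left(\pi_1(\V,\bp_\V)\right)$ is by construction a subgroup of $\S_f$, the collection of left cosets is well defined, and its cardinality is the index $[\S_f:\E_f]$. Transporting the connectivity condition across this bijection, $\Def_B^A(f)$ is connected if and only if there is exactly one left coset of $\E_f$ in $\S_f$.

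Finally I would observe that a subgroup has exactly one left coset in the ambient group if and only if it equals that group, so $[\S_f:\E_f]=1$ is equivalent to $\E_f=\S_f$. Chaining the three equivalences yields the claim. I do not expect a genuine obstacle here: the corollary is a counting statement that rides entirely on the bijection already established in Proposition~\ref{cosets-prop}. The only point requiring a moment's care is that $\E_f$ is indeed a subgroup of $\S_f$, so that the notion of cosets (and hence the index) is meaningful; this containment is already recorded in the definition of $\E_f$.
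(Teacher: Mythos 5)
Your proof is correct and is exactly the argument the paper intends: the corollary is stated as an ``easy consequence'' of Proposition~\ref{cosets-prop}, and your chain of equivalences (connected $\Leftrightarrow$ one component $\Leftrightarrow$ one left coset $\Leftrightarrow$ $\E_f=\S_f$) spells out precisely that counting argument. No gaps; your remark that $\E_f\subseteq\S_f$ makes the cosets well defined is the only hypothesis needed beyond the bijection, and it is built into the definition of $\E_f$.
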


\noindent 
It is also of interest to consider the topology of the components of $\Def_B^A(f)$.
\begin{proposition}
The components of $\Def_B^A(f)$ are simply-connected if and only if $i_*$ is injective.
\end{proposition}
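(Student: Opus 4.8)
The plan is to analyze the covering map $p=\omega\circ q\circ\iota:\Def_B^A(f)\to\V$ introduced in the proof of Proposition~\ref{cosets-prop} through its monodromy representation. Recall from that proof that $\S_f$ acts on $\Def_B^A(f)$ as the deck group of $p$, freely and simply transitively on each fiber. Identifying the fiber $p^{-1}(\bp_\V)$ with $\S_f$ via $g\mapsto g\cdot\bp$, the deck action becomes left multiplication, and the monodromy action of $\pi_1(\V,\bp_\V)$ becomes right multiplication by the values of a homomorphism $\rho:\pi_1(\V,\bp_\V)\to\S_f$. The whole argument reduces to comparing this $\rho$ with $i_*$.

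First I would show that $\rho=i_*$ as maps into $\S_f\subseteq\L_f=\pi_1(\W,\bp_\V)$. Given a loop $\gamma$ in $\V$ based at $\bp_\V$, I lift it through $p$ to a path in $\Def_B^A(f)$ starting at $\bp$; by definition of the monodromy its endpoint is $\rho([\gamma])\cdot\bp$. Pushing this lifted path into $\T_B$ along $\iota$ produces a lift of $i(\gamma)$ through the universal covering $\omega\circ q:\T_B\to\W$, whose endpoint is $i_*([\gamma])\cdot\bp$ under the identification $\pi_1(\W,\bp_\V)=\L_f$ of Proposition~\ref{diag1}. Since $\mod_B$ acts freely on $\T_B$, comparing the two endpoints forces $\rho([\gamma])=i_*([\gamma])$; in particular $\rho$ is injective if and only if $i_*$ is.

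Next, let $D$ be the connected component of $\Def_B^A(f)$ containing $\bp$. Standard covering space theory identifies $(p|_D)_*\pi_1(D,\bp)$ with the stabilizer of $\bp$ under the monodromy action, which in the coordinates above is exactly $\ker\rho=\ker i_*$. Since $(p|_D)_*$ is injective, $D$ is simply connected if and only if $\ker i_*$ is trivial, i.e. if and only if $i_*$ is injective. To pass from the single component $D$ to all components, I would use that $\S_f$ permutes the components of $\Def_B^A(f)$ transitively by deck transformations, as recorded by the coset description in Proposition~\ref{cosets-prop}: because deck transformations are homeomorphisms, every component is homeomorphic to $D$, so all components are simply connected precisely when $D$ is.

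The main obstacle I anticipate is the bookkeeping behind the identity $\rho=i_*$: one must track basepoints and lifts carefully through the three superimposed coverings $\T_B\to\T_B/\S_f\to\W$ together with the inclusion $\Def_B^A(f)\hookrightarrow\T_B$, and check that the chosen monodromy convention matches the identification $\pi_1(\W,\bp_\V)=\L_f$ used throughout the paper. A secondary point needing care is the homogeneity step: when $\V$ is disconnected one should first restrict attention to the components of $\Def_B^A(f)$ lying over the component of $\V$ containing $\bp_\V$, and only then invoke transitivity of the $\S_f$-action on those components.
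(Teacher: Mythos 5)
Your proposal is correct and is essentially the paper's argument: the paper's one-line proof invokes covering space theory to identify each component of $\Def_B^A(f)$ as a covering of $\V$ with fundamental group $\ker i_*$, and your monodromy computation (showing $\rho = i_*$ via uniqueness of lifts through the universal covering $\omega\circ q:\T_B\to\W$, then identifying $(p|_D)_*\pi_1(D,\bp)$ with $\ker\rho$) is precisely the standard covering-space fact the paper leaves implicit, together with the transitivity of the $\S_f$-action on components that transfers the conclusion from the basepoint component to all of them. Your added care about disconnected $\V$ is consistent with the paper's convention that $\pi_1(\V,\bp_\V)$ means the fundamental group of the component containing $\bp_\V$, so no gap arises.
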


\begin{proof} By the theory of covering spaces, each connected component of $\Def_B^A(f)$,  is a covering space of $\V$ with fundamental group 
equal to the kernel of $i_*$.
\end{proof}

\subsection{Equalizers and fundamental groups} We pause here to reformulate the problem we are solving for our family of maps $f:(\P^1,A)\to(\P^1,B)$. In Proposition~\ref{spaceV}, we will prove that 
\[
\V= \{w \in \W \ | \ \mu_A^B \circ \mu_B (w) = \mu_A(w)\}
\]
that is, $\V$ is the equalizer of the maps $\mu_A^B \circ \mu_B:\W\to\M_A$ and $\mu_A:\W\to\M_A$. By Proposition \ref{diag2}, we have 
\[
\S_f= \{\gamma \in \pi_1(\W,\bp_\V) \ | \ (\mu_A^B \circ \mu_B)_*(\gamma) = (\mu_A)_*(\gamma)\}.
\]
The space $\Def_A^B(f)$ is connected if and only if $\E_f=\S_f$.  That is, if the fundamental group of the equalizer equals the equalizer of the
fundamental groups.
For our particular examples $f:(\P^1,A)\to (\P^1,B)$, we will prove that that $\E_f$ has infinite index in $\S_f$. By Proposition \ref{cosets-prop}, this will establish Theorem \ref{mainthm}. 

\section{The proof of Theorem \ref{mainthm}}\label{proof}

\noindent Let $\langle f\rangle \in \mathrm{Per}_4(0)^*$. By conjugating with a M\"obius transformation, we may suppose that $f$ has a superattracting cycle of the form 
\begin{eqnarray}\label{cycle-eqn}
\xymatrix{
&0 \ar[r]^{2} & \infty \ar[r] & 1 \ar[r] &a \ar@(dl,dr)[lll] }
\end{eqnarray}
\smallskip

\noindent As evident by the work that follows, our results hold for any map $f$ representing an element of $\mathrm{Per}_4(0)^*$ with appropriately defined sets $A$ and $B$. For the sake of presentation however, we will work with a concrete example, so that in our coordinates the basepoint $\bp_\V\in\mathbb{R}$. We will work with
\[
f:\P^1\to\P^1\quad\text{given by}\quad f:z\mapsto \frac{(4z-3)(z+2)}{4z^2}.
\]
 The map $f$ has a superattracting cycle 
of the form in Line (\ref{cycle-eqn}) for $a =3/4$. The critical points of $f$ are $\{0,12/5\}$, and the critical values of $f$ are $\{\infty,121/96\}$. Define the set $A=\{0,1,\infty,3/4\}$, and the set $B=A\cup \{121/96\}$. By Theorem~\ref{localDef}, $\Def_B^A(f)$ is a 1-dimensional submanifold of a 2-dimensional Teichm\"uller space $\T_B$. The space $\T_A$ is 1-dimensional. We compute the spaces $\W$ and $\V$ for this particular $f$.  

We set our coordinates before doing  computations. Given $[\alpha] \in \M_A$ and 
$[\beta] \in \M_B$, 
let $\nu_\alpha$ and $\nu_\beta$ be M\"obius transformations such that
$\nu_\alpha \circ \alpha$ and $\nu_\beta \circ \beta$ are the identity on $\{0,1,\infty\}$.
Then we define coordinates:
\begin{eqnarray*}
\M_A  &\rightarrow& \C -\{0,1\}\\
\left [\alpha \right ] &\mapsto& x := (\nu_\alpha \circ \alpha)(3/4),
\end{eqnarray*}
and
\begin{eqnarray*}
\M_B  &\rightarrow& \mathbb{C}^2-\{y=0,y=1,y=z,z=0,z=1\}\\
\left [\beta \right ]&\mapsto& (y,z):=((\nu_{\beta} \circ \beta)(3/4),(\nu_\beta \circ \beta)(121/96)).
\end{eqnarray*}

\subsection{The space $\W$} By Lemma 2.5 in \cite{sarah}, a point in $\W$ consists of 
$(x,y,z,F)$ where $F$ is a rational map
$$
F:(\P^1,\{0,1,\infty,x\})\to (\P^1,\{0,1,\infty,y,z\})
$$
satisfying the combinatorial conditions below, where the marked points are distinct: 
\[
\xymatrix{
0\ar[d]^2 &\infty\ar[d]  &  1\ar[d]    &   x\ar[d]       & \ast\ar[d]^2  \\
\infty &1                    & y                & 0              & z}
\]
that is, $0$ is a critical point of $F$, $\mathrm{cv}(F)=\{\infty,z\}$, and 
\[
F(0)=\infty, \quad F(\infty)=1,\quad F(1)=y,\quad \text{and}\quad F(x)=0.
\]
As can easily be verified, such a rational map $F:\P^1\to \P^1$ must be of the following form: 
\[
F(t)=\frac{(x-t)(-tx+y+t+x-1)}{(x-1)t^2},\quad\text{where}\quad z=\frac{(-x^2+y+2x-1)^2}{4x(y-1+x)(1-x)}. 
\]
Note that the map $F$ has a superattracting cycle of the form in Line (\ref{cycle-eqn}) if and only if $x=y$. 

There is an isomorphism 
\[
\W\to\C^2-\Delta \quad \text{given by} \quad (x,y,z,F)\mapsto (x,y) 
\]
where $\Delta$ consists of all ``forbidden'' pairs $(x,y)$ leading to collisions of points in $\{0,1,\infty,z\}$, or collisions of points in $\{0,1,\infty,y,z\}$. The set $\Delta$ can be computed explicitly:
\[
\Delta=\{(x,y)\in\mathbb{C}^2\;|\;x=0,\;y=0,\;y=1,\;x=1,\;y-1+x=0,\;x^2-y-2x+1=0,
\]
\[
x^2+y-1=0,\text{ or }2xy+x^2-y-2x+1=0\}.
\]
We will use $(x,y)$ as coordinates on $\W$. Let $\proj_x: \C^2 \rightarrow \C$ (respectively, $\proj_y: \C^2 \rightarrow \C$) be projection of $\C^2$ onto the
$x$-coordinate (respectively, $y$-coordinate).  
\begin{figure}[ht] 
   \centering
   \includegraphics[width=6.5in]{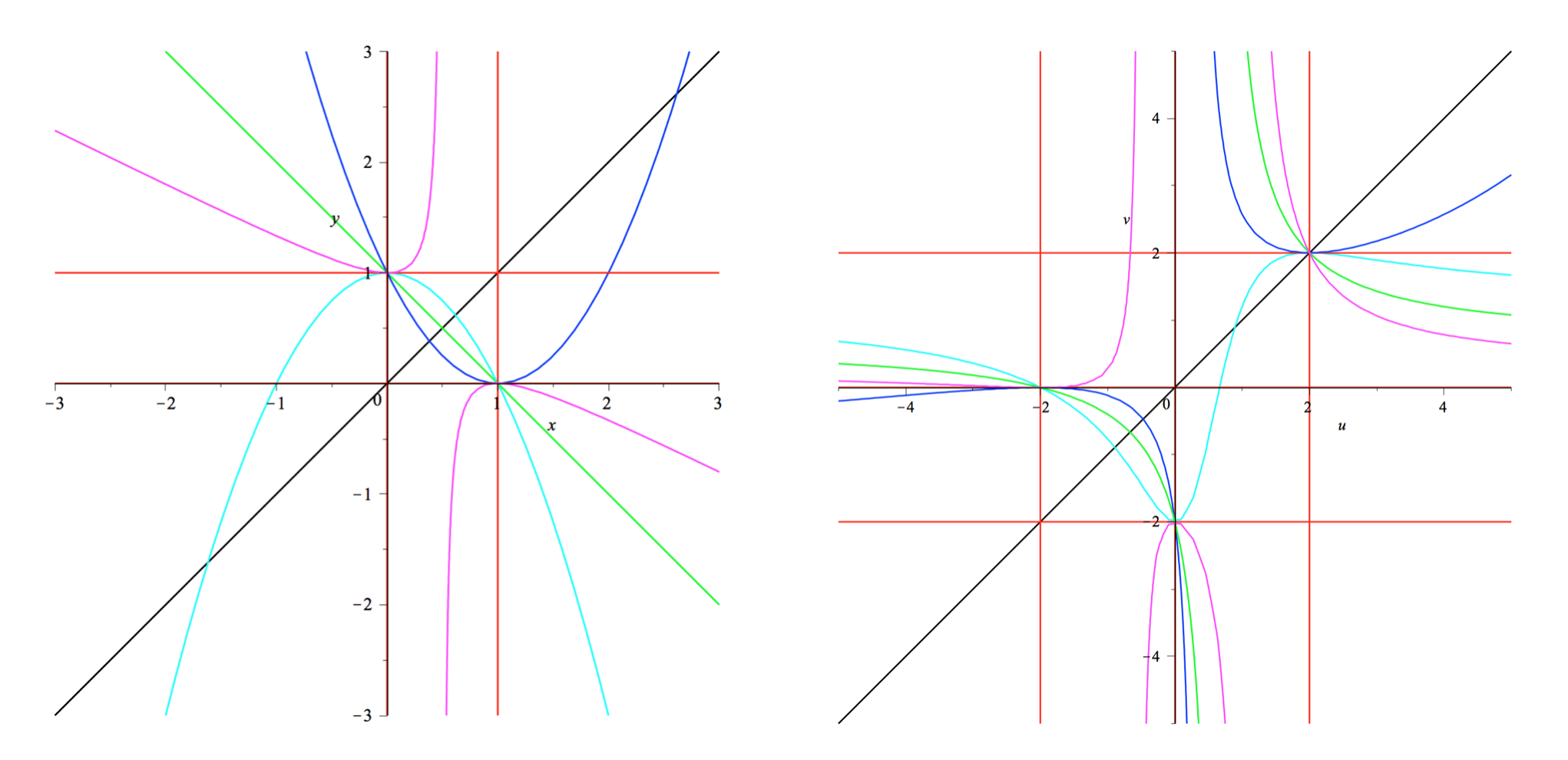} 
   \caption{On the left is the space $\W$ drawn in $\mathbb{R}^2$ in $(x,y)$-coordinates; it is the complement of the curves in $\Delta$, which are drawn in color. The black diagonal line is $\V$, and it intersects $\Delta$ in $10$ points (two of which are complex conjugates and one which is at $(\infty,\infty)$). On the right is a picture of $\W$ near $(x,y)=(\infty,\infty)$ drawn in $(u,v)$-coordinates, where $x=\frac{2u}{u-2}$ and $y=\frac{2v}{v-2}$. }
   \label{hi}
\end{figure}

\begin{proposition}\label{maps-wrt-coords}
The maps $\mu_A$, $\mu_A^B$ and $\mu_B$ can be expressed in these coordinates as follows:
the map $\mu_A$ is given by
\begin{eqnarray*}
\mu_A: \W &\rightarrow&\M_A\\
(x,y) &\mapsto& x;
\end{eqnarray*}
the map $\mu_B$ is a degree $4$ covering map,
\begin{eqnarray*}
\mu_B: \W&\rightarrow&\M_B\\
(x,y) &\mapsto& (y,z)
\end{eqnarray*}
where
$$
z=\frac{(-x^2+y+2x-1)^2}{4x(y-1+x)(1-x)};
$$
and the map $\mu_A^B$ is given by
\begin{eqnarray*}
\mu_A^B: \M_B &\to&\M_A\\
(y,z) &\mapsto& y.
\end{eqnarray*}
Thus we have
$$
\mu_A = \proj_x|_{\W} \qquad \mu_A^B \circ \mu_B = \proj_y|_{\W}.
$$
And in these coordinates, $\bp_\V=(3/4,3/4)$. 
\end{proposition}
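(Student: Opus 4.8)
The plan is to prove the proposition by translating the abstract definitions of the three maps into the explicit coordinates, using the description of points of $\W$ furnished by Lemma 2.5 of \cite{sarah}. The key observation is that for a point of $\W$ represented by $\tau=[\phi]\in\T_B$, Diagram (\ref{bigdiagram}) identifies $\mu_B(\tau)$ with the image of $\tau$ under the universal covering $\T_B\to\M_B$, and $\mu_A(\tau)$ with the image of $\sigma_f(\tau)$ under $\T_A\to\M_A$. Thus once a point of $\W$ is written as a tuple $(x,y,z,F)$, the coordinates record exactly these two data, and the proposition reduces to reading them off and then computing a degree and a basepoint.

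First I would identify $\mu_A$ and $\mu_B$. In the tuple $(x,y,z,F)$ the rational map $F$ carries the domain configuration $\{0,1,\infty,x\}$, a normalization of $\psi(A)$, to the range configuration $\{0,1,\infty,y,z\}$, a normalization of $\phi(B)$. Since the coordinate on $\M_A$ is defined by normalizing $\{0,1,\infty\}$ and recording the position of the remaining point, and since $\mu_A(\tau)=[\psi|_A]$ and $\mu_B(\tau)=[\phi|_B]$ by the discussion above, one reads off $\mu_A(x,y,z,F)=x$ and $\mu_B(x,y,z,F)=(y,z)$. Under the isomorphism $\W\cong\C^2-\Delta$, $(x,y,z,F)\mapsto(x,y)$, together with the already-established formula for $z$ in terms of $(x,y)$, this gives $\mu_A=\proj_x|_\W$ and $\mu_B\colon(x,y)\mapsto(y,z)$. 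Next I would treat the forgetful map $\mu_A^B\colon\M_B\to\M_A$: here $B-A=\{121/96\}$, so $\mu_A^B$ forgets the point whose normalized position is $z$, leaving the configuration $\{0,1,\infty,3/4\}$ whose normalized position of $3/4$ is $y$; hence $\mu_A^B(y,z)=y$ directly from the coordinate definitions, and combined with the previous step this yields $\mu_A^B\circ\mu_B=\proj_y|_\W$.

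It remains to compute the degree of $\mu_B$ and to locate the basepoint. For the degree, I would fix a generic $(y,z)\in\M_B$ and count the preimages $(x,y)\in\W$: these are the solutions of
\[
(-x^2+2x+y-1)^2-4z\,x(x+y-1)(1-x)=0,
\]
a polynomial in $x$ of degree $4$ whose leading coefficient is $1$, so a generic fiber consists of $4$ distinct points (lying off $\Delta$) and $\mu_B$ has degree $4$. For the basepoint, I would observe that $\bp=[\mathrm{id}]$ lies in $\Def_B^A(f)$ and lifts with $\phi=\psi=\mathrm{id}$ and $F=f$; hence the $\M_A$-coordinate $x$ and the $\M_B$-coordinate $y$ both equal $\mathrm{id}(3/4)=3/4$, giving $\bp_\V=(3/4,3/4)$, and as a consistency check the displayed form of $F$ specializes to $f$ and the formula returns $z=121/96$ there.

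The main subtlety is not computational but one of bookkeeping: making the identifications $\mu_A=\proj_x$ and $\mu_B\colon(x,y)\mapsto(y,z)$ airtight requires invoking Lemma 2.5 of \cite{sarah} carefully, so as to be certain that the coordinate $x$ genuinely records the pullback $\sigma_f$ followed by $\T_A\to\M_A$ under the chosen normalization of $\{0,1,\infty\}$, rather than some competing normalization. The degree count is then routine, once one confirms that the $x^4$-term of the above polynomial does not degenerate, which it does not since its coefficient is the constant $1$.
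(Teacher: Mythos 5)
Your proposal is correct and matches the paper's approach: the paper's entire proof is ``All of these assertions follow directly from the definitions,'' and your argument is exactly the routine unpacking of those definitions (reading $\mu_A$, $\mu_B$, $\mu_A^B$ off the tuple description $(x,y,z,F)$ from Lemma 2.5 of \cite{sarah}, counting the generic fiber of the quartic $(-x^2+2x+y-1)^2-4z\,x(x+y-1)(1-x)=0$ in $x$ for the degree, and evaluating at $\phi=\psi=\mathrm{id}$, $F=f$ for the basepoint), with your consistency checks $z=121/96$ and $F\big|_{x=y=3/4}=f$ verifying correctly.
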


\begin{proof} All of these assertions follow directly from the definitions.
\end{proof}

\subsection{The space $\V$}\label{spaceV}  We establish some properties of $\V$ in our particular setting.
\begin{proposition}
In these coordinates, the space $\V\subseteq \W$ is equal to the diagonal; that is, 
\[
\V=\{(x,y)\in\W\; |\; x=y\},
\]
and $\V$ is isomorphic to $\mathrm{Per}_4(0)^*$. 
\end{proposition}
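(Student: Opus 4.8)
The plan is to prove the equality by two inclusions and then to identify the diagonal with $\mathrm{Per}_4(0)^*$ by an explicit normalization of the superattracting cycle. Set $\D:=\{w\in\W\mid \mu_A(w)=\mu_A^B\circ\mu_B(w)\}$, the equalizer of $\mu_A$ and $\mu_A^B\circ\mu_B$. By Proposition~\ref{maps-wrt-coords} we have $\mu_A=\proj_x|_{\W}$ and $\mu_A^B\circ\mu_B=\proj_y|_{\W}$, so in our coordinates $\D=\{(x,y)\in\W\mid x=y\}$ is exactly the diagonal, and the inclusion $\V\subseteq\D$ is immediate from containment~(\ref{diag}). The entire content is thus the reverse inclusion $\D\subseteq\V$: one must upgrade the manifest equality of the images $\mu_A(w)$ and $\mu_A^B\circ\mu_B(w)$ in $\M_A$ to genuine equality of $\sigma_f(\tau)$ and $\sigma_A^B(\tau)$ in $\T_A$.

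For $\D\subseteq\V$ I would argue by path-lifting. First record that $\D$ is connected: it is the diagonal line meeting $\Delta$ in only finitely many points (Figure~\ref{hi}), hence a copy of $\C$ with finitely many punctures. Fix $w\in\D$, choose a path $\gamma$ in $\D$ from $\bp_\V$ to $w$, and write $c:=\omega\circ q:\T_B\to\W$ for the covering with deck group $\L_f$; since $\bp\in\Def_B^A(f)$ satisfies $c(\bp)=\bp_\V$, lift $\gamma$ to a path $\widetilde\gamma$ in $\T_B$ with $\widetilde\gamma(0)=\bp$. The key observation is that, writing $d:\T_A\to\M_A$ for the universal covering, the two paths $\sigma_f\circ\widetilde\gamma$ and $\sigma_A^B\circ\widetilde\gamma$ in $\T_A$ are both lifts through $d$ of the single path $\mu_A\circ\gamma=\mu_A^B\circ\mu_B\circ\gamma$ in $\M_A$. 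This uses the commutativity of Diagram~(\ref{bigdiagram}), which gives $d\circ\sigma_f=\mu_A\circ c$ and $d\circ\sigma_A^B=\mu_A^B\circ\mu_B\circ c$, together with $\gamma\subseteq\D$. Because $\bp\in\Def_B^A(f)$, these two lifts agree at the starting point, so by uniqueness of path-lifting they coincide for all $t$. Hence $\sigma_f(\widetilde\gamma(t))=\sigma_A^B(\widetilde\gamma(t))$ throughout, i.e. $\widetilde\gamma(t)\in\Def_B^A(f)$; in particular $\widetilde\gamma(1)\in\Def_B^A(f)$ lies over $w$, so $w\in\V$. This yields $\D\subseteq\V$ and hence $\V=\D$.

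For the identification with $\mathrm{Per}_4(0)^*$, I would use that a point $(x,y,z,F)\in\W$ lies on the diagonal precisely when $F$ has a superattracting $4$-cycle of the form in Line~(\ref{cycle-eqn}), so that $\langle F\rangle\in\mathrm{Per}_4(0)^*$; the candidate isomorphism is $(x,x)\mapsto\langle F\rangle$. To check bijectivity, note that every $\langle F\rangle\in\mathrm{Per}_4(0)^*$ has a unique representative with $4$-cycle $0\to\infty\to 1\to x\to 0$ and $0$ its critical point: the three assignments $0\mapsto\infty$, $\infty\mapsto 1$, $1\mapsto x$ pin down the conjugating M\"obius transformation, and $x=F(1)$ is then a complete conjugacy invariant, with $F$ recovered from $x$ by the explicit rational formula of this section. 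This makes the correspondence biholomorphic, once one verifies that the finitely many diagonal points lying in $\Delta$ are exactly those for which the cycle degenerates or leaves $\mathrm{Per}_4(0)^*$.

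I expect the reverse inclusion to be the main obstacle, since it is precisely where one passes from equality of images in the moduli space $\M_A$ to equality in the Teichm\"uller space $\T_A$; the path-lifting argument resolves this cleanly, but it depends essentially on the two facts that the basepoint $\bp$ already lies in $\Def_B^A(f)$ and that $\D$ is connected. The remaining work is bookkeeping: confirming the two commutativity identities read off from Diagram~(\ref{bigdiagram}) and checking that the excluded diagonal points match the complement of $\mathrm{Per}_4(0)^*$.
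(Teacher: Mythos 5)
Your proposal is correct, and its skeleton coincides with the paper's: both reduce everything to the inclusion $D\subseteq\V$ for $D:=\{(x,y)\in\W\,|\,x=y\}$ (the inclusion $\V\subseteq D$ being immediate from Line (\ref{diag}) and Proposition~\ref{maps-wrt-coords}), and both prove it by lifting a path $\gamma$ in $D$ starting at $\bp_\V$ through the covering $\omega\circ q:\T_B\to\W$ to a path $\widetilde\gamma$ starting at $\bp\in\Def_B^A(f)$ and showing that $\widetilde\gamma$ never leaves $\Def_B^A(f)$. Where you genuinely differ is the mechanism for that last step. The paper works with explicit normalized representatives: homeomorphisms $\phi_t,\psi_t$ representing $\widetilde\gamma(t)$ and $\sigma_f(\widetilde\gamma(t))$, fixed on $\{0,1,\infty\}$, with $\psi_t|_A=\phi_t|_A$ because $\gamma(t)\in D$, so that the continuous family $\phi_t^{-1}\circ\psi_t$, starting at the identity, is an isotopy rel $A$, giving $\sigma_f(\widetilde\gamma(t))=\sigma_A^B(\widetilde\gamma(t))$. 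You instead invoke uniqueness of path lifting through the universal cover $d:\T_A\to\M_A$: the paths $\sigma_f\circ\widetilde\gamma$ and $\sigma_A^B\circ\widetilde\gamma$ are both lifts of the same path in $\M_A$ and agree at $t=0$, hence everywhere. Your version buys a cleaner argument -- it replaces the isotopy-rel-$A$ verification (whose correctness depends on the continuity in $t$ of the family $\phi_t^{-1}\circ\psi_t$, which the paper leaves tacit) by a standard covering-space fact, and it makes explicit both the connectedness of $D$ and the role of $\bp\in\Def_B^A(f)$, which the paper also leaves implicit; the paper's version buys a self-contained argument entirely inside Teichm\"uller theory. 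Two small corrections: the identity $d\circ\sigma_A^B=\mu_A^B\circ\mu_B\circ(\omega\circ q)$ is \emph{not} part of the stated commutativity of Diagram (\ref{bigdiagram}) -- the arrow $\mu_A^B$ is dashed there precisely because $\mu_A\neq\mu_A^B\circ\mu_B$ away from the equalizer -- so you should instead cite the fact from Section~\ref{prelims} that $\sigma_A^B$ descends to $\mu_A^B$; and in your normalization for $\mathrm{Per}_4(0)^*$ the M\"obius conjugacy is pinned down by sending the cycle's unique critical point and its first two images to $0,\infty,1$ (your list of ``three assignments'' is slightly garbled, though the intent is right, and the single-critical-point hypothesis defining $\mathrm{Per}_4(0)^*$ is exactly what makes this normalization canonical, a point worth stating since the paper compresses this identification to ``it follows by construction'').
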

\begin{proof}
For purposes of this proof, set $D:=\{(x,y)\in\W\; |\; x=y\}$. By Line (\ref{diag}), $\V\subseteq D$. Let $\gamma:[0,1]\to  D$ be a path with the property that $\gamma(0)=\bp_\V$. Because $\omega\circ q:\T_B\to \W$ is a covering map, there is a unique lift $\widetilde \gamma:[0,1]\to\T_B$ with $\widetilde \gamma(0)=\bp$. We prove that $\widetilde\gamma(t)\in \Def_B^A(f)$
for all $t \in [0,1]$, establishing the result. Let $\phi_t:\P^1\to\P^1$ be a homeomorphism representing $\widetilde\gamma(t)$, which satisfies 
\[
\phi_t|_{\{0,1,\infty\}}=\mathrm{id}|_{\{0,1,\infty\}}.
\]
There is a homeomorphism $\psi_t:\P^1\to\P^1$ representing $\sigma_f(\widetilde\gamma(t))$, which satisfies 
\[
\psi_t|_{\{0,1,\infty\}}=\mathrm{id}|_{\{0,1,\infty\}},
\]
and a rational map  $F_t:(\P^1,\psi_t(A))\to (\P^1,\phi_t(B))$ such that the diagram 
\[
\xymatrix{
(\P^1,A) \ar[d]^f \ar[rr]^{\psi_t} & & (\P^1,\psi_t(B))\ar[d]^{F_t}\\
(\P^1,B) \ar[rr]^{\phi_t}  & &(\P^1,\phi_t(B))
}
\]
commutes. The path $\widetilde \gamma$ defines an isotopy from $\phi_t:\P^1\to\P^1$ to the identity, and the path $\sigma_f\circ \widetilde\gamma$ defines an isotopy from $\psi_t:\P^1\to\P^1$ to the identity.  The composition $\phi^{-1}_t\circ \psi_t :\P^1\to\P^1$ is isotopic to the identity relative to $A$ because $\gamma(t)\in  D$ and therefore $\psi_t|_A=\phi_t|_A$. This implies  $[\psi_t]=\sigma_A^B([\phi_t])$; that is,  $\sigma_A^B(\widetilde\gamma(t))=\sigma_f(\widetilde\gamma(t))$. 

It follows by construction that $D=\mathrm{Per}_4(0)^*$. 
\end{proof}
\begin{corollary}\label{Vinjects} The space $\V$ is connected, and 
the restrictions
\[
\mu_B|_{\V}:\V\to \M_B\quad\text{and}\quad \mu_A|_{\V} :\V\to \M_A
\]
are injective. 
\end{corollary}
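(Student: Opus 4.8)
The plan is to deduce the corollary directly from the preceding proposition, which identifies $\V$ with the diagonal $\{(x,y)\in\W\mid x=y\}$ in the coordinates of Proposition~\ref{maps-wrt-coords}. Once $\V$ is presented as this explicit curve inside $\W=\C^2-\Delta$, both assertions reduce to elementary observations, and no further input from Teichm\"uller theory is needed.

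First, for connectivity, I would parametrize $\V$ by the single coordinate $x$ via the biholomorphism $x\mapsto(x,x)$. Its image is exactly the set of $x$ for which $(x,x)\notin\Delta$, that is, the complement in the $x$-line of the finite set of points where the diagonal meets the curves comprising $\Delta$. Substituting $y=x$ into each of the eight defining equations of $\Delta$ produces a finite list of excluded values of $x$ (the ten points visible in Figure~\ref{hi}, two of which are complex conjugate and one of which lies at infinity). Since a line minus a finite set is connected, $\V$ is connected. The small point requiring attention here is to work on $\P^1$ rather than on $\C$, so that the intersection at $(\infty,\infty)$ is correctly accounted for, using the coordinate $x=2u/(u-2)$ near infinity as in Figure~\ref{hi}.

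For the injectivity statements, I would simply read off the maps in the coordinates of Proposition~\ref{maps-wrt-coords}. There $\mu_A(x,y)=x$ and $\mu_B(x,y)=(y,z)$, with $z$ a rational function of $(x,y)$. Restricting to $\V$, where $x=y$, the map $\mu_A|_\V$ sends $(x,x)\mapsto x$ and $\mu_B|_\V$ sends $(x,x)\mapsto(x,z)$. In both cases the value of $x$ appears among the coordinates of the image, and distinct points of $\V$ have distinct $x$; hence both restrictions are injective.

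The argument involves no genuine obstacle: the substantive work was already carried out in identifying $\V$ with the diagonal, and the corollary is a formal consequence of that identification together with the explicit formulas for $\mu_A$ and $\mu_B$. The only place calling for a little care is the bookkeeping for connectivity, namely checking that the diagonal is not contained in any component of $\Delta$ (so that it meets $\Delta$ in only finitely many points), which one verifies directly by the substitution $y=x$, and then incorporating the behavior at infinity.
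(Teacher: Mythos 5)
Your proof is correct and takes essentially the same route as the paper: the paper likewise deduces the corollary from the identification of $\V$ with the diagonal in the $(x,y)$-coordinates, noting that $\V$ is $\P^1$ with $10$ punctures (hence connected) and that injectivity of $\mu_A|_{\V}$ and $\mu_B|_{\V}$ is ``easily verified'' from the formulas $\mu_A(x,y)=x$ and $\mu_B(x,y)=(y,z)$. Your write-up merely makes those easy verifications explicit (the substitution $y=x$ into the defining equations of $\Delta$, and the observation that $x$ appears among the coordinates of the image in both cases).
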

\begin{proof} By the above choice of coordinates, $\V$ is isomorphic to $\P^1$
 with 10 punctures (see  Figure \ref{hi}).  It is easily verified that the restrictions $\mu_B|_{\V}$ and $\mu_A|_{\V}$ are injective.
\end{proof}
\noindent As a consequence, the group $\S_f$ is equal to the stabilizer of $\Def_B^A(f)$ for our family of examples $f:(\P^1,A)\to(\P^1,B)$ as proven in Proposition \ref{stab-prop}. 
\begin{proposition}\label{stab-prop}
The subgroup of $\mod_B$ consisting of elements which restrict to automorphisms of $\Def_B^A(f)$ is $\S_f$.
\end{proposition}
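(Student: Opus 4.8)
The plan is to identify the stabilizer $\{g\in\mod_B \mid g\cdot\Def_B^A(f)=\Def_B^A(f)\}$ — equivalently, the elements of $\mod_B$ whose restriction is an automorphism of $\Def_B^A(f)$ — with $\S_f$. One inclusion is immediate: by Proposition~\ref{SProp}, every $g\in\S_f$ satisfies $g\cdot\Def_B^A(f)=\Def_B^A(f)$, and since $g$ acts biholomorphically on $\T_B$ as a deck transformation, it restricts to an automorphism of $\Def_B^A(f)$. The content of the proposition is the reverse inclusion, and here the subtlety is that Proposition~\ref{SProp} only governs elements of $\L_f$, whereas a priori the stabilizer could contain elements of $\mod_B\setminus\L_f$. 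So the heart of the argument is to show that any $g\in\mod_B$ stabilizing $\Def_B^A(f)$ must in fact be liftable; Proposition~\ref{SProp} then places it in $\S_f$.

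To show $g\in\L_f$, I would use the covering-space description of $\L_f$ as the subgroup of $\mod_B=\pi_1(\M_B,\bp_B)$ corresponding to the finite cover $\mu_B:\W\to\M_B$. Writing $\omega\circ q:\T_B\to\W$ for the quotient by $\L_f$, the freeness of the $\mod_B$-action on $\T_B$ gives the standard criterion
\[
g\in\L_f \quad\Longleftrightarrow\quad (\omega\circ q)(g\cdot\bp)=\bp_\V.
\]
Now since $\bp\in\Def_B^A(f)$ and $g$ stabilizes $\Def_B^A(f)$, we have $g\cdot\bp\in\Def_B^A(f)$, and therefore $(\omega\circ q)(g\cdot\bp)$ lies in $\V=(\omega\circ q)(\Def_B^A(f))$. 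It remains only to see that this point of $\V$ is $\bp_\V$ itself.

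The final step pins down which point of $\V$ we have reached, and this is where the geometry of the family enters. Because $g$ is a deck transformation of the universal covering $\mu_B\circ(\omega\circ q):\T_B\to\M_B$, both $g\cdot\bp$ and $\bp$ lie in the fiber over $\bp_B$; hence $(\omega\circ q)(g\cdot\bp)$ and $\bp_\V$ are two points of $\V$ with the same image $\bp_B$ under $\mu_B$. By Corollary~\ref{Vinjects} the restriction $\mu_B|_\V:\V\to\M_B$ is injective, so these two points coincide: $(\omega\circ q)(g\cdot\bp)=\bp_\V$. By the criterion above, $g\in\L_f$, and then Proposition~\ref{SProp} yields $g\in\S_f$, completing the reverse inclusion.

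I expect the main obstacle to be precisely this last step — ruling out stabilizing elements outside $\L_f$ — since Proposition~\ref{SProp} gives no leverage there. The essential input is the injectivity of $\mu_B|_\V$ from Corollary~\ref{Vinjects}, which is special to this family of maps and forces the projected image of $g\cdot\bp$ to be the basepoint rather than some other point of $\V$; note that connectivity of $\V$ is not needed here, only injectivity of the restricted cover.
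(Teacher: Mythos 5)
Your proof is correct and takes essentially the same route as the paper: both directions rest on Proposition~\ref{SProp} together with the injectivity statement of Corollary~\ref{Vinjects} and the freeness of the $\mod_B$-action on $\T_B$. The paper packages your basepoint-chasing argument at the level of quotients, observing that the injectivity of $\mu_B\circ\omega\circ\overline\iota:\Def_B^A(f)/\S_f\to\M_B$ forces $\Def_B^A(f)/\S_f\to\Def_B^A(f)/\mathrm{G}_f$ to be a homeomorphism, whereas you unwind the same content pointwise via the fiber criterion for membership in $\L_f$ --- a step the paper never makes explicit but which is implicit in its diagram.
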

\begin{proof}
Let $\mathrm{G}_f\subseteq \mod_B$ be the subgroup of elements which restrict to automorphisms of $\Def_B^A(f)$. By Proposition~\ref{SProp}, $\S_f\subseteq \mathrm{G}_f$.
Consider the commutative diagram:
$$
\xymatrix{
\Def_B^A(f) \ar[d]\ar @{^{(}->}[r]^{\iota} &\T_B\ar[d]^q&\\
\Def_B^A(f)/\S_f\ar[d]\ar @{^{(}->}[r]^{\overline{\iota}} &\T_B/\S_f\ar[d]\ar[rd]^\omega&\\
\Def_B^A(f)/\mathrm{G}_f \ar @{^{(}->}[r] &\T_B/\mathrm{G}_f \ar[d]&\T_B/\L_f\ar[ld]^{\mu_B}\\
&\M_B&
}
$$
The map 
\[
\mu_B\circ\omega\circ\overline\iota:\Def_B^A(f)/\S_f\to\M_B
\]
is injective and hence the map
$$
\Def_B^A(f)/\S_f \rightarrow \Def_B^A(f)/\mathrm{G}_f
$$
is a homeomorphism.  Thus  $\S_f = \mathrm{G}_f$.
\end{proof}

At this point, we would like to compare $\E_f$, and $\S_f$, but the fundamental groups of the spaces involved are rather complicated. Instead, we will include the space $\W$ into a somewhat simpler space $\hW$ where the fundamental group  is easier to understand, $j:\W\hookrightarrow \hW$.  We will compare the corresponding groups $\hE:=j_*(\E_f)$ and $\hS:=j_*(\S_f)$ in $\pi_1(\hW,\bp_\V)$. 

\subsection{A simpler space}\label{simple} Define 
\[
\widehat\W:=\mathbb{C}^2-\{x=0,y=0,x=1,y=1,x+y=1\}, 
\]
the complement in $\mathbb C^2$ of the lines $C, R_1,R_2,S_1,S_2$  in Figure~\ref{bpW}, and define $\widehat\V:=\{(x,y)\in \widehat\W\;|\;y=x\}$; it is isomorphic to $\P^1 -\{0,1/2,1,\infty\}$. 
Let $j:\W\hookrightarrow\hW$ and $\widehat i : \hV \hookrightarrow \hW$ be the inclusion maps. 
By functoriality of fundamental groups, we have
$$
\hE= \widehat i_*(\pi_1(\hV,\bp_\V)).
$$
\begin{proposition}\label{j-surjective}
The map 
\[
j_*:\pi_1(\W,\bp_\V)\to\pi_1(\hW,\bp_\V)
\]
is surjective. 
\end{proposition}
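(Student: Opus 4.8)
The plan is to recognize that $\W$ is obtained from $\hW$ by deleting a complex-analytic subset of complex codimension one, and then to invoke the standard fact that such a deletion induces a surjection on fundamental groups. Comparing the defining equations, $\hW=\mathbb{C}^2-(C\cup R_1\cup R_2\cup S_1\cup S_2)$ is the complement of the five lines $x=0$, $x=1$, $y=0$, $y=1$, $x+y=1$, while $\W=\mathbb{C}^2-\Delta$ removes, in addition to these five lines, the three curves
\[
Z_0:=\{x^2-y-2x+1=0\},\quad Z_1:=\{x^2+y-1=0\},\quad Z_2:=\{2xy+x^2-y-2x+1=0\}.
\]
Setting $Z:=(Z_0\cup Z_1\cup Z_2)\cap\hW$, I observe that $\W=\hW-Z$, and that $Z$ is a closed complex-analytic subset of $\hW$ of complex dimension one, i.e.\ real dimension two inside the real four-manifold $\hW$. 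The basepoint $\bp_\V=(3/4,3/4)$ lies in $\W$, hence off $Z$, so it is a legitimate common basepoint for both spaces.

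Granting this, surjectivity of $j_*$ is a transversality argument. Given any loop $\gamma\colon(S^1,\ast)\to(\hW,\bp_\V)$, I would first homotope $\gamma$ rel basepoint to a smooth (or piecewise-linear) loop and then perturb it to be transverse to $Z$. Since $\dim_{\mathbb{R}}\gamma(S^1)+\dim_{\mathbb{R}}Z=1+2=3<4=\dim_{\mathbb{R}}\hW$, transversality forces $\gamma(S^1)\cap Z=\emptyset$; the perturbation is achieved by an arbitrarily small, hence basepoint-fixing, homotopy supported away from $\bp_\V$. Thus $\gamma$ is homotopic in $\hW$ to a loop lying entirely in $\W=\hW-Z$, so $[\gamma]\in\mathrm{im}(j_*)$. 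Equivalently, from the Zariski--van Kampen viewpoint of Appendix~\ref{zvk-sec}, filling back in the three curves $Z_0,Z_1,Z_2$ only imposes the relations killing their meridians: a meridian of a component of $Z$ bounds a small transverse disk in $\hW$, so $\pi_1(\hW,\bp_\V)$ is the quotient of $\pi_1(\W,\bp_\V)$ by the normal closure of these meridians, and a quotient map is surjective.

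The argument is entirely formal, so I do not expect a substantive obstacle; the only points requiring (routine) care are that $Z$ genuinely has real codimension two and that the perturbing homotopy can be taken rel basepoint. The three curves $Z_0,Z_1,Z_2$ are the zero loci of nonconstant polynomials, hence complex algebraic curves, and any singular points they possess form a set of still lower dimension, so the codimension-two count is unaffected; standard transversality (general position) then supplies a homotopy fixing the endpoint $\bp_\V$. This completes the reduction.
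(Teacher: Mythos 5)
Your proof is correct, but it takes a genuinely different route from the paper's. The paper argues via generic line sections: by the Lefschetz Hyperplane Theorem for arrangement complements (Corollary~\ref{LH-thm}), for a generic line $L$ through $\bp_\V$ the groups $\pi_1(L\cap\W,\bp_\V)$ and $\pi_1(L\cap\hW,\bp_\V)$ surject onto $\pi_1(\W,\bp_\V)$ and $\pi_1(\hW,\bp_\V)$ respectively; since these sectional groups are free on meridinal generators, sending each loop around a puncture of $L\cap\hW$ to the loop around the same puncture of $L\cap\W$ defines a right inverse of $(j_L)_*$, and surjectivity of $j_*$ then follows from the commuting square. You instead use pure general position: $\W=\hW-Z$ where $Z$ is a closed algebraic curve in $\hW$, hence of real codimension two, so any based loop in $\hW$ can be pushed off $Z$ by a small homotopy supported away from $\bp_\V$ (and your stratification remark correctly disposes of the singular and intersection points of $Z$, which have still lower dimension; note the three components are individually smooth conics, so only their pairwise intersections matter). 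Your argument is more elementary and more robust: it needs nothing about arrangements or fibrations, and it quietly sidesteps the fact that Corollary~\ref{LH-thm} is stated only for \emph{line} arrangements whereas $\Delta$ contains three conics, so the paper is implicitly invoking the general Zariski hyperplane-section theorem for curve complements at that step. What the paper's route buys is compatibility with the Zariski--van Kampen machinery of Section~\ref{present} and explicit control of meridinal generators --- which your closing remark recovers anyway, since the observation that $\pi_1(\hW,\bp_\V)$ is the quotient of $\pi_1(\W,\bp_\V)$ by the normal closure of the meridians of the three deleted conics is in fact slightly sharper than surjectivity alone. Both proofs are complete.
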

\begin{proof}
Let $L\in \C^2$ be a generic line containing $\bp_\V$. By the Lefschetz Hyperplane Theorem (see Corollary~\ref{LH-thm}), $\pi_1(\W,\bp_\V)$ is generated by the image of $\pi_1(L\cap \W,\bp_\V)$ under the map induced by inclusion
$$
 L\cap\W\hookrightarrow \W,
$$ 
and $\pi_1(\hW,\bp_\V)$ is generated by the image of $\pi_1(L\cap \hW,\bp_\V)$ under the map induced by inclusion
 $$
 L\cap \hW\hookrightarrow \hW.
 $$ 
 Let $j_L : L \cap \W \rightarrow L \cap \hW$ be the restriction of $j$.
 The following diagram commutes. 
\[
\xymatrix{\pi_1(L\cap \W,\bp_\V)\ar[d]^{({j_L})_*}\ar@{->>}[r] &\pi_1(\W,\bp_\V)\ar[d]^{j_*}\\
\pi_1(L\cap \hW,\bp_\V)\ar@{->>}[r] &\pi_1(\hW,\bp_\V).}
\]
Consider the map 
\[
(j_L)_*:\pi_1(L\cap \W,\bp_\V)\to \pi_1(L\cap \hW,\bp_\V).
\]
There is a right inverse given by taking oriented loops around the points
in $L \cap \hW$ to oriented loops around the same points in $L \cap \W$, and the claim follows.
\end{proof}

\noindent
Abusing notation, denote by $\proj_x$ and $\proj_y$ the restrictions of the coordinate projections to $\hW$.

\begin{proposition}\label{surjectiveSL} We have 
\[
\hSL = \{\gamma \in \pi_1(\widehat\W, \bp_{\V}) \ | \ (\proj_x)_* (\gamma) = 
(\proj_y)_*(\gamma) \}. 
\]
\end{proposition}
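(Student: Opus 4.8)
The plan is to reduce the description of $\hSL$ to the one of $\S_f$ already in hand, by exploiting the surjectivity of $j_*$ from Proposition~\ref{j-surjective} and the fact that the coordinate projections $\proj_x,\proj_y$ on $\hW$ are compatible with the maps $\mu_A$ and $\mu_A^B\circ \mu_B$ on $\W$ computed in Proposition~\ref{maps-wrt-coords}. Recall from the reformulation preceding this proposition that
\[
\S_f = \{\gamma \in \pi_1(\W,\bp_\V) \ | \ (\mu_A)_*(\gamma) = (\mu_A^B\circ\mu_B)_*(\gamma)\},
\]
and that $(\mu_A)_* = (\proj_x|_\W)_*$ and $(\mu_A^B\circ\mu_B)_* = (\proj_y|_\W)_*$. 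The target of both projections, up to the identifications of Proposition~\ref{diag1}, is $\pi_1(\C-\{0,1\},3/4) = \mod_A$, so the condition $(\proj_x)_*(\gamma)=(\proj_y)_*(\gamma)$ on $\pi_1(\hW,\bp_\V)$ makes sense and defines the right-hand side above. Write $\hSL' := \{\delta \in \pi_1(\hW,\bp_\V) \ | \ (\proj_x)_*(\delta) = (\proj_y)_*(\delta)\}$ for this right-hand set; the goal is $\hSL = \hSL'$.

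The key observation is that the projections commute with the inclusion $j$: since $\proj_x\circ j = \proj_x|_\W$ and likewise for $y$ (as $\hW$ and $\W$ sit in the same $\C^2$ with the same coordinate functions), on fundamental groups we get $(\proj_x)_*\circ j_* = (\proj_x|_\W)_* = (\mu_A)_*$ and $(\proj_y)_*\circ j_* = (\proj_y|_\W)_* = (\mu_A^B\circ\mu_B)_*$. First I would use this to check the easy inclusion $j_*(\S_f)\subseteq \hSL'$: if $\gamma\in\S_f$ then $(\proj_x)_*(j_*\gamma) = (\mu_A)_*(\gamma) = (\mu_A^B\circ\mu_B)_*(\gamma) = (\proj_y)_*(j_*\gamma)$, so $j_*\gamma\in\hSL'$; hence $\hSL = j_*(\S_f)\subseteq \hSL'$. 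For the reverse inclusion, take any $\delta\in\hSL'$. By Proposition~\ref{j-surjective}, $j_*$ is surjective, so $\delta = j_*\gamma$ for some $\gamma\in\pi_1(\W,\bp_\V)$. Then $(\mu_A)_*(\gamma) = (\proj_x)_*(\delta) = (\proj_y)_*(\delta) = (\mu_A^B\circ\mu_B)_*(\gamma)$, which says exactly that $\gamma\in\S_f$; therefore $\delta = j_*\gamma\in j_*(\S_f) = \hSL$. Combining the two inclusions gives $\hSL = \hSL'$, as desired.

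The only point requiring genuine care — the step I expect to be the main obstacle — is the verification that the two descriptions of the \emph{target} of the projections are literally the same, i.e.\ that $\proj_x,\proj_y$ on $\hW$ land in the same punctured plane, with the same basepoint and the same identification of $\pi_1$ with $\mod_A$, as the maps $\mu_A$ and $\mu_A^B\circ\mu_B$ on $\W$. Here the fact that the removed lines of $\hW$ include $\{x=0\},\{x=1\}$ (so $\proj_x(\hW)\subseteq \C-\{0,1\}$) and $\{y=0\},\{y=1\}$ (so $\proj_y(\hW)\subseteq \C-\{0,1\}$) is what makes $(\proj_x)_*$ and $(\proj_y)_*$ well-defined into $\pi_1(\C-\{0,1\})=\mod_A$; one must confirm that $\hW$ was defined with precisely these lines removed for this reason, and that the basepoint $\bp_\V=(3/4,3/4)$ projects to the chosen basepoint $3/4$ under both coordinates. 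Once the targets and basepoints are matched, the argument is the purely formal surjectivity-and-commutation diagram chase above, and the remaining content is entirely routine.
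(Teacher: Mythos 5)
Your proposal is correct and takes essentially the same route as the paper's proof: both identify $\hSL = j_*(\S_f)$, use the characterization of $\S_f$ from Proposition~\ref{diag2} together with the commutation relations $(\proj_x\circ j)_* = (\mu_A)_*$ and $(\proj_y\circ j)_* = \left(\mu_A^B\circ\mu_B\right)_*$ from Proposition~\ref{maps-wrt-coords}, and conclude via the surjectivity of $j_*$ from Proposition~\ref{j-surjective}. The only difference is expository: you spell out the two inclusions and the matching of targets and basepoints explicitly, where the paper compresses this into a one-line diagram chase.
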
 
\begin{proof}
By definition, $\hSL=j_*\left(\S_f\right)$, and by Proposition~\ref{diag2}
\[
\S_f= j_*\left(\{\gamma\in \L_f\;|\; (\mu_A)_*(\gamma)=\left(\mu_A^B \circ \mu_B\right)_*(\gamma)\}\right).
\]
By Proposition~\ref{maps-wrt-coords} we have the commutative diagram
$$
\xymatrix{
\W \ar[d]^{\mu_B} \ar@{^{(}->}[r]^j&\hW\ar[d]^{\proj_y}\\
\M_B \ar[r]^{\mu_A^B} &\M_A
}
$$
which implies the equality
$$
(\proj_y \circ j)_* = (\mu_A^B \circ \mu_B)_*,
$$
and by Proposition~\ref{maps-wrt-coords}, we have
$$
(\proj_x\circ j)_* = (\mu_A)_*.
$$
The claim follows by surjectivity of $j_*$ (see Proposition~\ref{j-surjective}). 
\end{proof}

\begin{remark}\label{strategy-rem}{\em
Our strategy is to produce an element $g\in \hS$ none of whose powers
other than the identity is in $\hE$.
It then follows that 
the group generated by any nontrivial element $\gamma \in j_*^{-1}(g)\subseteq \S_f$ has infinite order, and
$$
\langle \gamma\rangle \cap \E_f = 1.
$$
This implies that the cosets
$\gamma^n\cdot \E_f \subseteq \S_f$ are distinct and hence the index of $\E_f$ in $\S_f$ is infinite.
Theorem~\ref{mainthm} then follows from Proposition~\ref{cosets-prop}. 
}
\end{remark}

\subsection{Presentations}\label{present}
In this section we present the fundamental groups of $\widehat\W$ and $\widehat \V$ using standard braid monodromy techniques
\cite{mt}, \cite{eko}.  The space $\widehat\W$ is homeomorphic to the complement of the well-studied Ceva line arrangement and other computations
of its fundamental group can be found, for example, in \cite{acl}.  We repeat the computation here in order make clear the relation
with the fundamental group of $\widehat \V$.

In the following, all loops encircling punctures are oriented by the complex structure, and
 in the figures turn in the counter-clockwise direction.

\begin{figure}[ht] 
   \centering
   \includegraphics[width=5.0in]{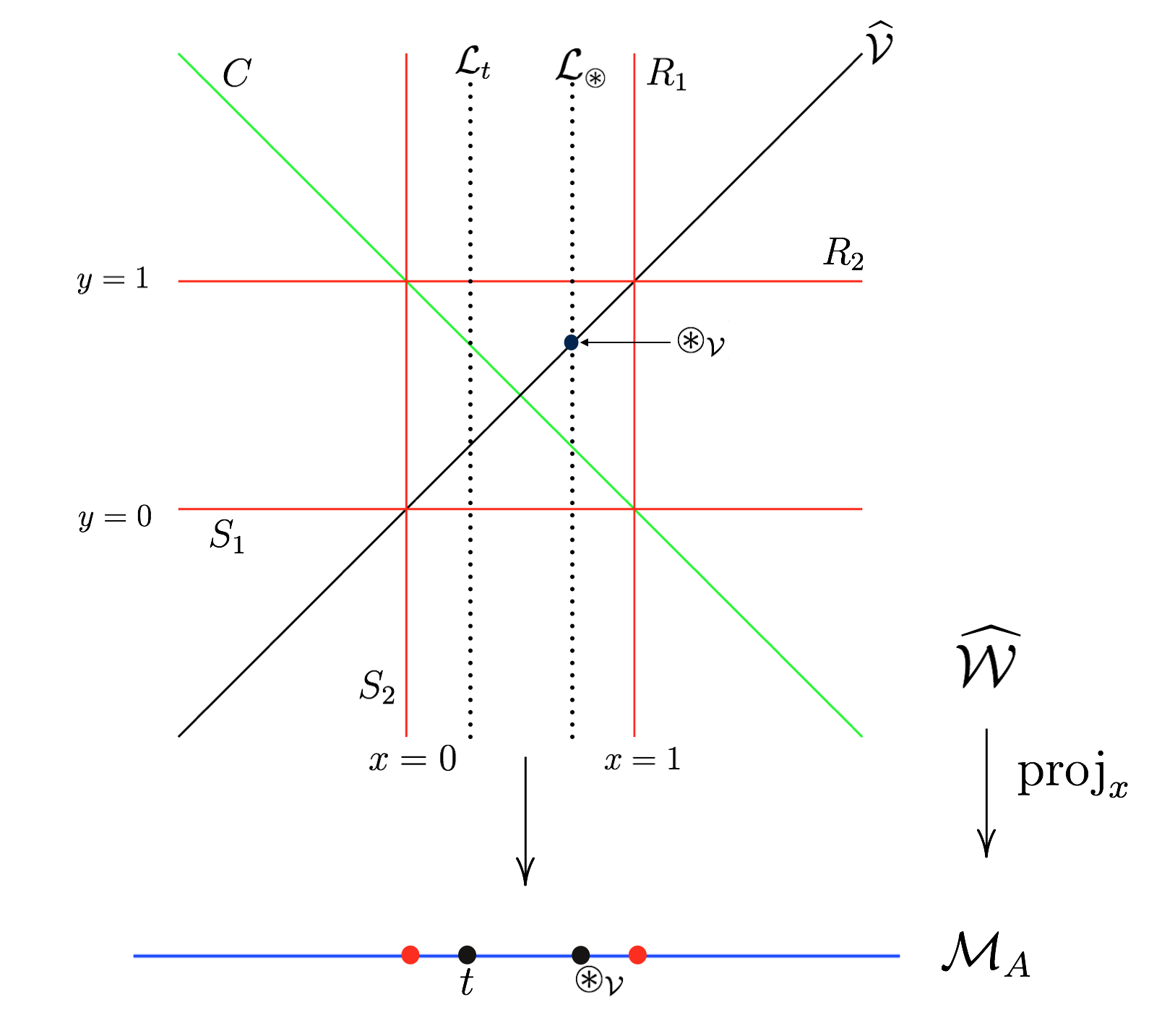} 
   \caption{The space $\widehat\W$ is a fiber bundle over $\M_A$ with each fiber equal to the complement in $\mathbb C$ of 3 points given by the intersections of the lines $S_1$, $R_2$, and $C$.}
   \label{bpW}
\end{figure}
The map $\proj_x:\hW\to \M_A$
 is a fiber bundle, over a $K(\pi,1)$. Here $\pi = \pi_1(\M_A,\bp_A)$ is the free group on
 2 generators.   Thus, for $\mathcal L_\bp := \proj_x^{-1}(\bp_A)$, we have a short 
 exact sequence of fundamental groups
\begin{eqnarray}\label{presentation-eqn}
 1 \rightarrow \pi_1(\mathcal L_\bp, \bp_\V) \rightarrow \pi_1(\widehat\W,\bp_\V) \rightarrow \pi_1(\M_A,\bp_A) \rightarrow 1.
\end{eqnarray}
We can present $\pi_1(\hW,\bp_\V)$ as a semi-direct product
$$
\pi_1(\hW,\bp_\V) = \pi_1(\mathcal L_\bp, \bp_\V) \rtimes F_2,
$$
where $F_2$ is the image of a splitting $\pi_1(\M_A,\bp_A) \hookrightarrow  \pi_1(\widehat\W,\bp_\V)$.

\begin{figure}[t] 
   \centering
   \includegraphics[width=3.5in]{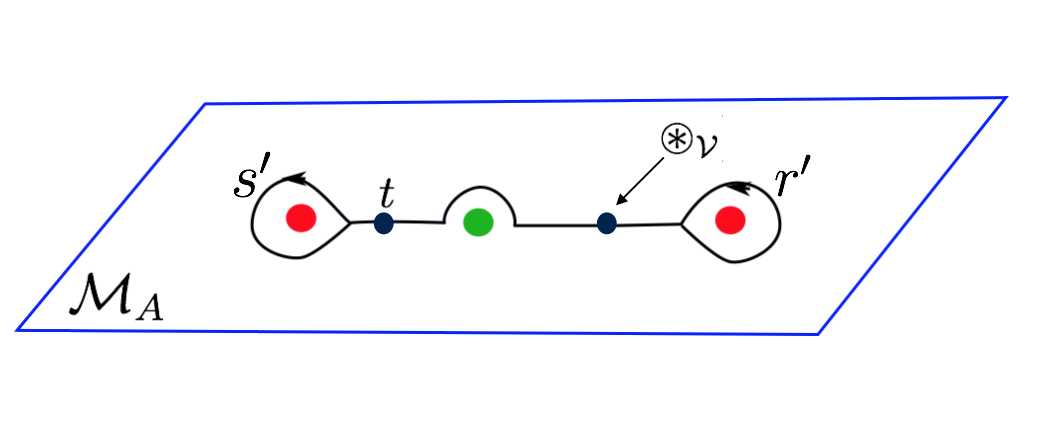} 
   \caption{Representations of the 
   generators ${r'}$ and ${s'}$ for $\pi_1(\M_A,\bp_A)$ are drawn as in black, and encircle the images of the lines $R_1$ and $S_2$.  The central dot
   indicates the image of the line $C$.  }
   \label{genbase}
\end{figure}
Present the fundamental group $\pi_1(\M_A,\bp_A)$ as the free group on the generating
loops  $r'$ and $s'$ drawn in Figure~\ref{genbase}.  The  splitting will be defined as follows. 
Note that $\proj_x |_{\hV}$ injects $\hV$ into $\M_A$. By definition, $r'$ and $s'$  lie in the
image of $\proj_x |_{\hV}$ and hence have well-defined lifts $r, s$ in $\hV$.  We take the map
that sends the generators $r'$ to $r $ and $s'$ to $s$ to be
our splitting $\pi_1(\M_A,\bp_A) \rightarrow \pi_1(\hW,\bp_\V)$.
The fiber group $\pi_1(\mathcal L_\bp,\bp_\V)$ is freely generated by $r_2,s_1,c$, the meridinal
loops on $\mathcal L_\bp$ around $R_2$, $S_1$ and $C$, respectively, as drawn in Figure~\ref{mono}.  

\begin{figure}[t] 
   \centering
   \includegraphics[width=5.5in]{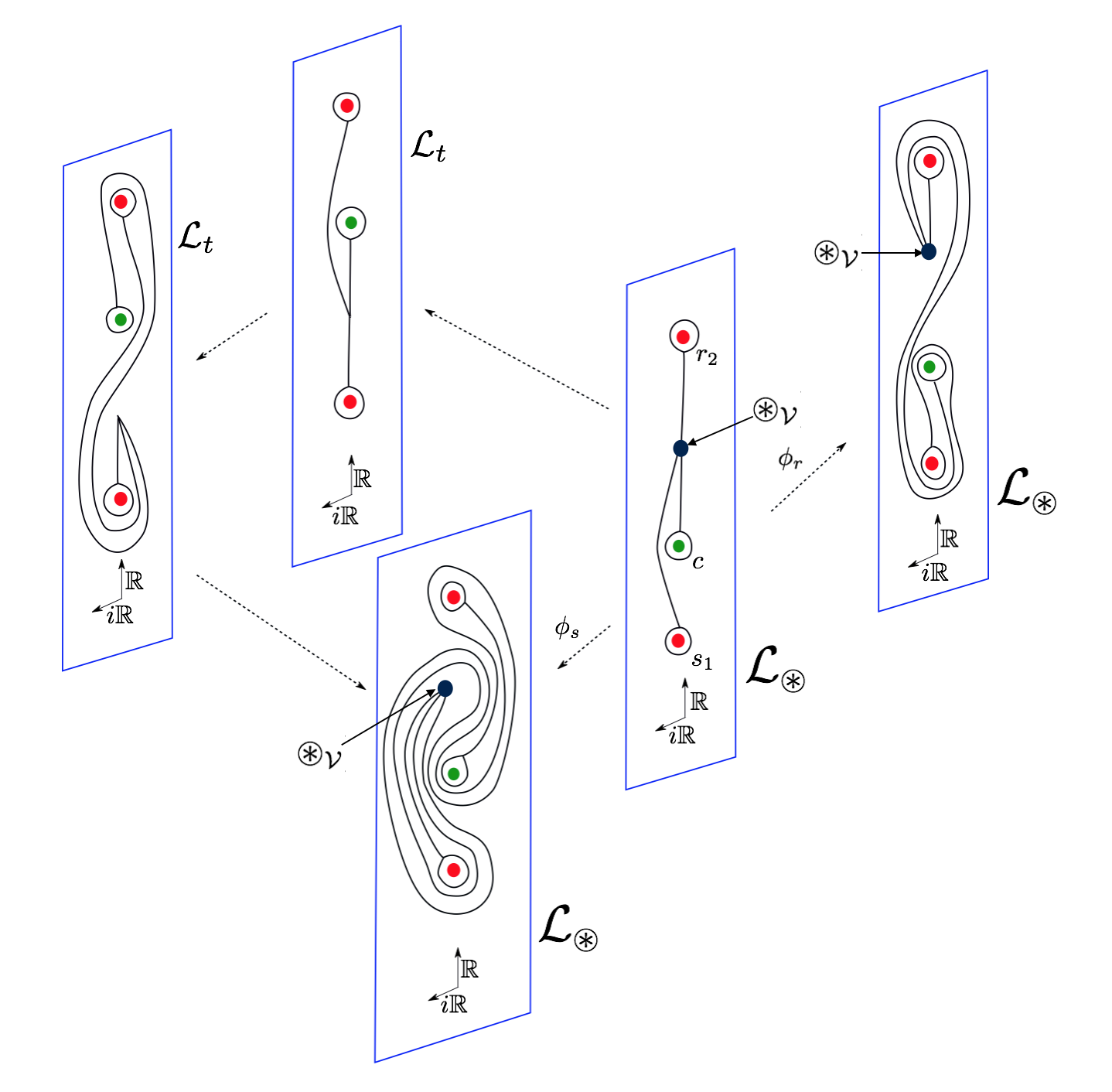} 
   \caption{The monodromy actions $\phi_r$ and $\phi_s$ on generators of $\pi_1(\mathcal L_\bp,\bp_\V)$ are drawn. The action $\phi_r$
   is split up into stages correponding to the decomposition of $r'$ into the path segments $\bp_A$ to $t$, a closed loop based at $t$
   and a return path segment $t$ to $\bp_A$ (see Figure~\ref{bpW}). }
   \label{mono}
\end{figure}

Following the Zariski-van Kampen Algorithm (see Theorem~\ref{zvk-thm}), 
we can write $\pi_1(\widehat\W, \bp_\V)$ as a semi-direct product $\langle r_2,s_1,c \rangle \rtimes \langle r, s \rangle$, 
or 
$$
\langle r_2,s_1,c,r,s : r^{-1} r_2 r  = \phi_r(r_2), r^{-1} s_1 r = \phi_r(s_1), s^{-1} r_2s = \phi_s(r_2), s^{-1}s_1s  = \phi_s(s_1)\rangle
$$
where the actions $\phi_r$ of $r$ and $\phi_s$ of  $s$ by conjugation on $\langle r_2,s_1,c \rangle$ are
defined by the monodromy of fibers over the loops $r'$ and $s'$.
To find the monodromy, we note that the local monodromy on a small loop encircling a  singular value of the projection 
in the counter-clockwise direction can be represented
by the braid that does a full counter-clockwise twist on the strands corresponding to the intersecting lines: the monodromy around a 
semi-circle in the counter-clockwise direction is a half counter-clockwise twist on the corresponding strands.
Snapshots of particular fibers over the loops $r'$ and $s'$ are illustrated in Figure~\ref{mono}.

One finds $\phi_r$ and $\phi_s$ by comparing the snapshots of $\mathcal L_\bp$ in Figure~\ref{mono}
before and after the monodromies
defined by ${r'}$ and ${s'}$.  These maps are indicated in Figure~\ref{mono} by the arrows labeled $\phi_r$ and $\phi_s$.
The maps $\phi_r$ and $\phi_s$ give the following relations for $\pi_1(\widehat\W,\bp_\V)$: 
\begin{eqnarray*}
r^{-1}r_2 r &=& r_2\\
r^{-1}cr&=& r_2^{-1}s_1cs_1^{-1}r_2\\
r^{-1}s_1r&=& r_2^{-1}s_1cs_1c^{-1}s_1^{-1}r_2\\
s^{-1}r_2s &=& s_1^{-1}cr_2c^{-1}s_1\\
s^{-1}cs &=& s_1^{-1}cr_2cr_2^{-1}c^{-1}s_1\\
s^{-1}s_1s &=& s_1.
\end{eqnarray*}

\noindent Let $r_1 = r r_2^{-1}$ and $s_2 = s s_1^{-1}$. 
With respect
to the new generators $r_1,r_2,s_1,s_2,c$, the presentation simplifies to
\begin{eqnarray}\label{presentation-eq}
&&\langle r_1,r_2,s_1,s_2,c:r_1r_2=r_2r_1, s_1s_2 = s_2s_1,
r_1s_1c = c r_1s_1 = s_1 c r_1, r_2 s_2 c = s_2 c r_2 = c r_2 s_2 \rangle.
\end{eqnarray}
The group $\pi_1(\widehat\V,\bp_\V)$ is freely generated by $r,s$ and $c$
and their images under $\widehat i_*$ are given by
\begin{eqnarray*}
r &\mapsto& r = r_1r_2\\
s &\mapsto& s  = s_1s_2\\
c &\mapsto& c.
\end{eqnarray*}

\begin{proof}[Proof of Theorem~\ref{mainthm}]
Let $g = s_2 c r_1  s_1  c r_2$.  We claim that $g \in \hS$, and no nonzero power of $g$ lies in $\hE$.
We have
$$
(\proj_x)_*(g) = s' r', \qquad (\proj_y)_*(g) = (\proj_x)_*(s_1 c r_2 s_2 c r_1) = s' r'.
$$
Consider the quotient $Q$ of $\pi_1(\hW, \bp_\V)$ given by adding the relations $a = r_1 = r_2, b = s_1 = s_2$.  Then we have
$$
Q = \langle a,b,c, d \ : d =   a b c =   b c a =  c  a  b \rangle \simeq \langle d \rangle \times \langle a, b\rangle,
$$
and quotient map $q: \pi_1(\hW, \bp_\V) \rightarrow Q$.
If we add the relation $d = 0$, then
$$
q(\hE) = \langle a^2,b^2, ab \rangle \subseteq \langle a,b \rangle,
$$
is a free subgroup, and has trivial center.

Since $q(g) = d^2$ has infinite order in  $Q$, $g$ much have infinite order in $\hS$.  On the other
hand, the nonzero powers of $q(g)$ lie in the center of $Q$, while the center of 
$\hE$ in $Q$ is trivial.  This implies
that no nonzero power of $g$ lies in $\hE$.

We have shown that $g$ satisfies the conditions in Remark~\ref{strategy-rem}
thus proving Theorem~\ref{mainthm}.
\end{proof}

\appendix

\section{Zariski-van Kampen algorithm}\label{zvk-sec}

\subsection{Fundamental groups of complements of algebraic plane curves.}
In Sections~\ref{simple} and \ref{present}, we will use a technique originally due to van Kampen and Zariski (see \cite{zvk}, \cite{chen})
for computing presentations of the fundamental group of the complement of an algebraic plane curve $\mathcal C \subseteq \C^2$.
For the reader's convenience, we give a brief outline of the technique for the special case of line arrangements defined over $\mathbb R$ 
(cf. \cite{eko}). 

Let $L = L_1 \cup \cdots \cup L_k$ and $J = J_1\cup \cdots \cup J_s \subseteq \C^2$ be unions 
of distinct lines, and let $\proj : \C^2 \rightarrow \C$ be 
a projection such that 
\begin{itemize}
\item the projection is generic with respect to $L_1,\dots,L_k$, in particular,
no component $L_i$ is equal to a  fiber of $\proj$, and 
\item each of the components $J_1,\dots,J_s$  of $J$ are fibers of $\proj$.
\end{itemize}
The Zariski-van Kampen algorithm, which we will now discribe, gives a way to compute $\pi_1(\C^2 - L\cup J,\bullet)$.

Let $U = \{u_1,\dots,u_r\} \subseteq \C$ be the images of
the intersection points of $L$, and let $V = \{v_1,\dots,v_s\} \subseteq \C$ be the images of
$J_1,\dots,J_s$ under $\mathrm{proj}$.  Let $P = U \cup V$,  $E= \C^2 - L\cup J$ and let $S = \proj^{-1}(P)$. Then $\proj$ restricts to a fiber bundle
$$
p : E - S \rightarrow \C.
$$
Let $*$ be an arbitrary point in $\C - P$, and let $F_* = p^{-1}(*)$. Then $F_*$ is isomorphic to a complex line in $\C^2$ with 
$k$ points removed.  Let $\bullet$ be a point in $F_*$.  Then, since $\C - P$ is a $K(\pi,1)$,
we have a short exact sequence 
\begin{eqnarray}\label{short}
1 \rightarrow \pi_1(F_*,\bullet) \rightarrow \pi_1(E - S,\bullet) \rightarrow \pi_1(\C-P,*) \rightarrow 1.
\end{eqnarray}
Write $\pi_1(F_*,\bullet) = \langle x_1,\dots,x_k\rangle$ and $\pi_1(\C-P,*) = \langle y'_1,\dots,y'_r, z'_1,\dots,z'_s\rangle$, 
where $y'_1,\dots,y'_r$ are meridinal loops around $u_1,\dots,u_r$ and $z'_1,\dots,z'_s$ are
meridinal loops around $v_1,\dots,v_s$.   Here a meridinal loop in $\C - P$ around a point $w \in P$ is a
loop that follows a path $\tau$ in $\C - P$ from $*$ to a point near $w$ then follows a small circle in the counter-clockwise
direction around $w$, then follows $\tau$ back to $*$.  
With this notation, $\pi_1(E - S,\bullet)$ is
a semi-direct product
$$
\pi_1(E - S,\bullet) = \pi_1(F_*,\bullet) \rtimes  \alpha_*(\pi_1(\C-P,*))
$$
where 
\begin{eqnarray*}
\alpha: \pi_1(\C-P,*) &\rightarrow& \pi_1(E - S,\bullet)\\
y'_i &\mapsto& y_i
\end{eqnarray*}
is a choice of splitting.  Thus,
$\pi_1(E - S,\bullet)$ has a presentation with generators 
$$
x_1,\dots,x_k,\ y_1,\dots,y_r,\ z_1,\dots,z_s
$$
and relations
$$
y_i^{-1}x_j y_i = \phi_i(x_j), i = 1,\dots,r,j = 1,\dots,k
$$
and
$$
z_i^{-1}x_jz_i = \psi_i(x_j),  i = 1,\dots,r,j = 1,\dots,k.
$$
where 
$$
\phi_1,\dots,\phi_r , \psi_1,\dots,\psi_s \in \text{Aut}(\pi_1(F_*,\bullet))
$$
are determined by the choice of splitting $\alpha$.  An iteration of applications of the van Kampen theorem for fundamental groups of
unions can be used to prove the following \cite{zvk} \cite{chen}.

\begin{theorem}[Zariski-van Kampen Theorem for complements of planar line arrangements] \label{zvk-thm}
If $J$ is a finite union  of fibers 
of a generic projection of $\C^2 - L$ to $\C$, then 
the presentation of $\pi_1(\C^2 - (L \cup J),\bullet)$ is obtained from that of $\pi_1(E-S,\bullet)$ by
adding the relations 
$$
y_1=1,\dots,y_k = 1.
$$
\end{theorem}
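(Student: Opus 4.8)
The plan is to prove the presentation by tracking exactly how $\pi_1$ changes as one rebuilds $E = \C^2 - (L \cup J)$ from the fiber-bundle total space $E - S$ appearing in the exact sequence (\ref{short}). The first point to record is that deleting $S = \proj^{-1}(P)$ from $E$ removes nothing along the fibers over $V$: since each $J_i$ is the whole fiber $\proj^{-1}(v_i)$ and is already deleted in forming $E$, we have $\proj^{-1}(v_i) \cap E = \emptyset$, whence $E - S = E - \proj^{-1}(U)$. Moreover, as the projection is generic we may assume $U \cap V = \emptyset$, so that $\proj^{-1}(U) \cap E = \bigcup_{i=1}^r D_i$ with $D_i := \proj^{-1}(u_i) - L$ a connected complex hypersurface (real codimension two) in $E$, one for each image $u_i$ of an intersection point of $L$. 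Thus $E$ is obtained from $E - S$ by adjoining the $r$ disjoint submanifolds $D_1, \dots, D_r$, while the fibers over $V$, carrying the meridians $z_i$, are never restored.

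The engine of the proof is the standard fact that if $N$ is a connected manifold and $D \subset N$ is a connected, locally flat submanifold of real codimension two, then the inclusion $N - D \hookrightarrow N$ induces a surjection $\pi_1(N - D) \to \pi_1(N)$ whose kernel is the normal closure of a single meridian of $D$. I would apply this one fiber at a time: put $E_0 := E - S$ and $E_i := E_{i-1} \cup D_i$, thickened to a tubular neighborhood $T_i$ of $D_i$. Since $T_i$ deformation retracts onto $D_i$ and the punctured neighborhood $T_i - D_i$ is a punctured-disk bundle over $D_i$ whose fundamental group surjects onto $\pi_1(T_i) = \pi_1(D_i)$ with kernel generated by the meridian $m_i$, the Seifert--van Kampen pushout for $E_i = E_{i-1} \cup T_i$ has the sole effect of killing $m_i$ (the $\pi_1(D_i)$-directions are already carried by a nearby fiber of $p$, so they are merely identified, not killed). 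Iterating over $i = 1, \dots, r$ gives $\pi_1(E) = \pi_1(E - S)$ modulo the normal closure of $m_1, \dots, m_r$, so that no new generators appear and only $r$ relations are added.

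The step I expect to be the main obstacle, and the one where genericity is indispensable, is the identification of each meridian $m_i$ with the base generator $y_i$. Downstairs, a meridian of the fiber $D_i$ is a small counterclockwise loop in $\C - P$ around $u_i$, that is, the generator $y'_i$; its image under the splitting $\alpha$ of (\ref{short}) is $y_i$. One must verify that the upstairs meridian $m_i$ can be chosen to project to this loop and to represent $\alpha(y'_i) = y_i$ up to a conjugacy that is absorbed on passing to the normal closure. Genericity ensures that over a small disk about $u_i$ the only degeneration of the fibration $p$ is the collision of the strands corresponding to the lines of $L$ that meet over $u_i$, so that the total space over the punctured disk is, away from $D_i$, a product of a nearby fiber with the base circle; following $m_i$ through this product structure and the semidirect-product splitting shows that the class killed is precisely $y_i$. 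Finally, imposing $y_i = 1$ in the semidirect-product presentation of $\pi_1(E - S)$ turns each conjugation relation $y_i^{-1} x_j y_i = \phi_i(x_j)$ into the monodromy relation $x_j = \phi_i(x_j)$, while the relations $z_i^{-1} x_j z_i = \psi_i(x_j)$ coming from the unfilled fibers $J$ persist, yielding the stated presentation. (The relations to be added are one per image in $U$, namely $y_1 = 1, \dots, y_r = 1$.)
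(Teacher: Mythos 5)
Your proposal is correct and follows essentially the same route the paper indicates for this theorem: the paper gives no detailed proof, citing instead ``an iteration of applications of the van Kampen theorem'' \cite{zvk}, \cite{chen}, and your argument---filling back in the deleted fibers $\proj^{-1}(u_i)-L$ one at a time via tubular neighborhoods, using the codimension-two Seifert--van Kampen lemma to kill one meridian per fiber, and identifying that meridian with $\alpha(y_i')$ up to conjugacy via the local product structure near $u_i$ (where genericity enters)---is precisely the classical Zariski--van Kampen/Cheniot proof being cited. You also correctly note, in passing, that the relations added should be $y_1=1,\dots,y_r=1$, one per point of $U$, repairing the index typo ($y_k$) in the paper's statement.
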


\noindent In the case when $J$ is empty, we have the following consequence.

\begin{corollary}[Lefschetz Hyperplane Theorem for complements of planar line arrangements]\label{LH-thm}  Given an arrangement of lines 
$L \subseteq \C^2$, and inclusion $F_* \hookrightarrow \C^2 - L $, where  $F_*$ is any line
in general position with respect to $L$ there is a surjective map on fundamental groups
$$
\pi_1(F_*,\bullet) \rightarrow \pi_1(\C^2-L,\bullet).
$$
where $\bullet \in F_*  - F_* \cap L$ is any arbitrary element. 
\end{corollary}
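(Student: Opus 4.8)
The plan is to read the corollary off Theorem~\ref{zvk-thm} in the degenerate case $J = \emptyset$. First I would choose the linear projection $\proj : \C^2 \to \C$ whose fibers are the lines parallel to $F_*$, so that $F_*$ is itself one fiber, $F_* = \proj^{-1}(*)$ with $* \in \C - P$. Because $F_*$ is in general position with respect to $L$ it is transverse to every component $L_i$, hence parallel to none of them, so no $L_i$ is a fiber. Granting for the moment that this projection is also generic in the sense required by the algorithm --- the singular points of $L$ having distinct images (I return to this point below) --- we are placed in the short exact sequence (\ref{short}), with $P = U$ and with this particular $F_*$ as fiber.

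With $J = \emptyset$, Theorem~\ref{zvk-thm} states that a presentation of $\pi_1(\C^2 - L, \bullet)$ is obtained from the presentation of $\pi_1(E - S, \bullet)$ by adjoining the relations that kill every base meridian $y_i$. Once all the $y_i$ are set equal to $1$, the only remaining generators are $x_1, \dots, x_k$, which are by construction the standard free generators of the fiber group $\pi_1(F_*, \bullet)$. The inclusion $F_* \hookrightarrow \C^2 - L$ factors as $F_* \hookrightarrow E - S \hookrightarrow \C^2 - L$ and carries each $x_j$ to the element of the same name; thus its image contains the full generating set $\{x_1, \dots, x_k\}$ of $\pi_1(\C^2 - L, \bullet)$, which is exactly the asserted surjectivity. (Conceptually this is just the statement that, in the exact sequence (\ref{short}), filling the deleted fibers $S$ back in kills the image $\pi_1(\C - P, *)$ of the base, leaving the normal fiber subgroup to surject.)

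The one point requiring care --- and the step I expect to be the real obstacle --- is matching the phrase ``any line in general position'' with the projection-genericity demanded by the algorithm: fixing $F_*$ pins down the fiber direction, and a priori two singular points of $L$ could lie on a common line parallel to $F_*$, so that their images under $\proj$ would coincide. I would dispose of this in one of two equivalent ways. Either I absorb such a coincidence into the algorithm directly --- a fiber carrying several punctures of $L$ only enlarges the bookkeeping of the monodromy, not the conclusion that the $x_j$ generate after the $y_i$ are killed --- or I first prove surjectivity for one genuinely generic fiber and then transport it: the lines in general position with respect to $L$ form a connected (Zariski-open) family over which the punctured lines $F - F\cap L$ assemble into a fiber bundle, so the image of the inclusion on $\pi_1$ is locally constant and surjectivity for a single member forces it for every member.
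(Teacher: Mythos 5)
Your proposal is correct and follows the same route as the paper, which derives the corollary directly from Theorem~\ref{zvk-thm} in the case $J=\emptyset$: killing the base meridians $y_1,\dots,y_r$ leaves only the fiber generators $x_1,\dots,x_k$, so the inclusion of the fiber induces a surjection on $\pi_1$. Your closing discussion of the mismatch between ``any line in general position'' and projection-genericity (several singular points of $L$ sharing a fiber) is a legitimate subtlety the paper passes over silently, and either of your two fixes disposes of it correctly.
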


In practice, it is not difficult to find $\phi_i$ and $\psi_j$ using local monodromies, particularly in the case when the lines are defined by
real equations.  This is because we can choose generators for $\pi_1(\C - P,\ast)$ that are concatenations of real segments, and small semi-circles
or circles  centered at the points in $P$.  This allows one to decompose the monodromy into simpler pieces as is done in Section~\ref{present}.

\bigskip

\end{document}